\definecolor{corlinks}{RGB}{0,150,0}
\definecolor{cormenu}{RGB}{0,0,150}
\definecolor{corurl}{RGB}{0,0,150}
\newcommand{\aS}{\mathfrak{S}}
\newcommand{\IN}{\mathbb{N}}
\newcommand{\IF}{\mathbb{F}}
\newcommand{\oIF}{{\overline{\mathbb{F}}}}
\newcommand{\IC}{\mathbb{C}}
\newcommand{\GL}{\textsf{\textup{GL}}}
\newcommand{\SL}{\textsf{\textup{SL}}}
\newcommand{\sgn}{\textup{sgn}}
\newcommand{\Wedge}{{\textstyle\bigwedge}}
\newcommand{\tensor}{{\textstyle\bigotimes}}
\newcommand{\Sym}{\textup{Sym}}
\newcommand{\sym}{\textup{sym}}
\newcommand{\la}{\lambda}
\newcommand{\supp}{\textup{supp}}
\renewcommand{\c}{\textup{\textsf{c}}}
\newcommand{\id}{\textup{id}}
\newcommand{\stab}{\textup{stab}}
\newcommand{\Hodge}{\textsf{\textup{D}}}
\newcommand{\Wronski}{\textsf{\textup{W}}}
\newcommand{\Hermite}{\textsf{\textup{R}}}
\newcommand{\KP}{\textsf{\textup{K}}}
\newcommand{\I}{\textsf{\textup{I}}}
\newcommand{\Diag}{\textsf{Diag}}
\newcommand{\adj}{\mathrm{adj}}
\renewcommand{\det}{\mathrm{det}}
\renewcommand{\k}{\textsf{\textup{k}}}
\renewcommand{\Im}{\mathsf{Im}}
\renewcommand{\d}{\mathsf{d}}
\newcommand{\s}{\mathsf{s}}
\newcommand{\ann}{\mathrm{ann}}
\newcommand{\coinv}{V_Z}
\newcommand{\Hom}{\textup{Hom}}
\newtheorem{theorem}{Theorem}
\numberwithin{theorem}{section}
\newtheorem{lemma}[theorem]{Lemma}
\newtheorem{proposition}[theorem]{Proposition}
\newtheorem{corollary}[theorem]{Corollary}
\newtheorem{claim}[theorem]{Claim}
\theoremstyle{definition}
\newtheorem{definition/}[theorem]{Definition}
\newenvironment{definition}
  {
   \pushQED{\qed}\begin{definition/}}
  {\popQED\end{definition/}}
\newtheorem{remark/}[theorem]{Remark}
\newenvironment{remark}
  {
   \pushQED{\qed}\begin{remark/}}
  {\popQED\end{remark/}}
\title{Field-independent Kronecker-plethysm isomorphisms}
\author{Christian Ikenmeyer\\{\footnotesize University of Warwick}\\{\footnotesize\url{christian.ikenmeyer@warwick.ac.uk}} \and Heidi Omar\\{\footnotesize University of Warwick}\\{\footnotesize\url{heidiomar.ho@gmail.com}} \and Dimitrios Tsintsilidas\\{\footnotesize University of Warwick}\\{\footnotesize\url{Dimitrios.Tsintsilidas@warwick.ac.uk}}}
\begin{document}
\raggedbottom

\maketitle

\begin{abstract}
We construct an explicit field-independent SL$_2$-equivariant isomorphism between an invariant space of tensors and a plethysm space.
The existence of such an isomorphism was only known in characteristic 0, and only indirectly via character theory.
Our isomorphism naturally extends the web of field-independent isomorphisms given by Hermite reciprocity, Hodge duality, and the Wronskian isomorphism.
This is a characteristic free generalization of a classical situation in characteristic zero: certain rectangular Kronecker coefficients coincide with certain plethysm coefficients, and their non-negativity proves the unimodality of the $q$-binomial coefficient.
In the dual situation, we also establish a field-independent version of a coinvariant space and show the corresponding isomorphisms.

We also give a short combinatorial field-independent proof of the known fact that the Hermite reciprocity map over the standard basis is a triangular matrix with 1s on the main diagonal.
\end{abstract}

\newcommand\blfootnote[1]{
  \begingroup
  \renewcommand\thefootnote{}\footnotetext{#1}
  \addtocounter{footnote}{-1}
  \endgroup
}
\blfootnote{For the purpose of open access, the authors have applied a Creative Commons Attribution (CC-BY) license to any Author Accepted Manuscript version arising from this submission.}

{\noindent\footnotesize\textbf{Keywords:} Representation theory of SL$_2$, Kronecker coefficients, plethysm coefficients, modular representation theory}

\section{Introduction}
The classical Hermite reciprocity law states that $\Sym^m(\Sym^\ell \IC^2)$ and $\Sym^\ell(\Sym^m \IC^2)$ are isomorphic $\GL_2(\IC)$-representations, where $\Sym^m W$ denotes the symmetric power. This statement is false over arbitrary fields, as was shown in \cite{Kou:90}. In fact, proper duals have to be taken, see \eqref{eq:Hermite} below.
 
For any field $\IF$, let $W$ be a $\GL_2(\IF)$-representation, and let $\Sym^m W = \big((\tensor^\bullet W)/\langle x\otimes y - y \otimes x\rangle\big)_m$ denote its $m$-th symmetric power. Let $\aS_m$ denote the symmetric group on $m$ symbols. Let $\Sym_m W =(\tensor^m W)^{\aS_m}$ denote the $m$-th divided power of $W$, i.e., the vector space of $\aS_m$-invariant order $m$ tensors.
The representations $\Sym^m W$ and $\Sym_m W$ are dual to each other (see Lemma~\ref{lem:duality} below).
The self-duality of $\SL_2(\IC)$-representations makes $\Sym^m W$ and $\Sym_m W$ isomorphic $\SL_2(\IC)$-representations over $\IC$, which makes it difficult to see the field-independent structure.
One gets a Hermite reciprocity isomorphism $\Hermite_{m,\ell}$ in a field-independent way after choosing the duals correctly:
\begin{equation}\label{eq:Hermite}
\Hermite_{m,\ell} : \Sym_m \Sym^\ell \IF^2 \ \stackrel{\sim}{\longrightarrow} \ \Sym^\ell\Sym_m\IF^2,
\end{equation}
see \cite{AFPRW:19} and \cite{MW:22}, later again discussed in \cite{RS:21}.
The isomorphism $\Hermite_{m,\ell}$ is defined via field-independent versions of classical isomorphisms, as depicted in Figure~\ref{fig:square}:
$\Hermite_{m,\ell} = \Wronski^\star_{\ell,m} \circ \Hodge_{m,\ell} \circ\Wronski_{m,\ell}$.
In this paper, we add new explicit isomorphisms to this picture that connect plethysm spaces (i.e., compositions of $\Sym^\bullet$ or $\Sym_\bullet$, such as $\Sym_m\Sym^\ell\IF^2$) with invariant and coinvariant spaces of tensors as follows.

\begin{figure}
\centering
\scalebox{1}{
\begin{tikzpicture}[xscale=3,yscale=0.8]
\node[anchor=west] at (-0.75,3) {The isomorphisms:};
\begin{scope}
\node (ll) at (0,0) {$\Sym_m \Sym^{\ell} \IF^2$};
\node (ul) at (0,2) {$\Wedge^m \Sym^{\ell+m-1} \IF^2$};
\node (lr) at (3,0) {$\Sym^\ell \Sym_m \IF^2$};
\node (ur) at (3,2) {$\Wedge^\ell \Sym_{\ell+m-1} \IF^2$};
\draw[-Triangle] (ll) -- (ul) node [midway, right, fill=white] {\footnotesize $\Wronski_{m,\ell}$};
\draw[-Triangle] (ul) -- (ur) node [midway, above, fill=white] {\footnotesize $\Hodge_{m,\ell} = \Hodge_{\ell,m}^\star$};
\draw[-Triangle] (ur) -- (lr) node [midway, right, fill=white] {\footnotesize $\Wronski_{\ell,m}^\star$};
\draw[-Triangle] (ll) -- (lr) node [midway, above, fill=white] {\footnotesize $\Hermite_{m,\ell}=\Hermite^{\star}_{\ell,m}$};
\node (kronI) at (0,-2) {$\big(\Sym_{\ell m}(\IF^{\ell\times\ell\times2})\big)_{\SL_\ell(\oIF)\times \SL_\ell(\oIF)}$};
\node (kronIID) at (3,-2) {$\big(\Sym^{\ell m}(\IF^{m\times m\times 2} )\big)^{\SL_m(\oIF)\times \SL_m(\oIF)}$};
\draw[-Triangle] (kronIID) -- (lr) node [midway, right, fill=white] {\footnotesize $\KP_{\ell,m}$};
\draw[Triangle-] (kronI) -- (ll) node [midway, right, fill=white] {\footnotesize $\KP^\star_{m,\ell}$};
\draw[-Triangle] (kronI) -- (kronIID) node [midway, above, fill=white] {\footnotesize $\I_{m,\ell}=\I_{\ell,m}^{\star}$};
\end{scope}
\node[anchor=west] at (-0.75,-3.5) {The dual situation. The same maps appear, just with swapped parameters:};
\begin{scope}[shift={(0,-6.5)}]
\node (lld) at (0,0) {$\Sym^m \Sym_{\ell} \IF^2$};
\node (uld) at (0,2) {$\Wedge^m \Sym_{\ell+m-1} \IF^2$};
\node (lrd) at (3,0) {$\Sym_\ell \Sym^m \IF^2$};
\node (urd) at (3,2) {$\Wedge^\ell \Sym^{\ell+m-1} \IF^2$};
\draw[Triangle-] (lld) -- (uld) node [midway, right, fill=white] {\footnotesize $\Wronski^\star_{m,\ell}$};
\draw[Triangle-] (uld) -- (urd) node [midway, above, fill=white] {\footnotesize $\Hodge_{\ell,m} = \Hodge_{m,\ell}^\star$};
\draw[Triangle-] (urd) -- (lrd) node [midway, right, fill=white] {\footnotesize $\Wronski_{\ell,m}$};
\draw[Triangle-] (lld) -- (lrd) node [midway, above, fill=white] {\footnotesize $\Hermite_{\ell,m}=\Hermite^\star_{m,\ell}$};
\node (kronId) at (0,-2) {$\big(\Sym^{\ell m}(\IF^{\ell\times \ell\times2})\big)^{\SL_\ell(\oIF)\times \SL_\ell(\oIF)}$};
\node (kronII) at (3,-2) {$\big(\Sym_{\ell m}(\IF^{m\times m\times 2})\big)_{\SL_m(\oIF)\times \SL_m(\oIF)}$};
\draw[Triangle-] (kronII) -- (lrd) node [midway, right, fill=white] {\footnotesize $\KP_{\ell,m}^\star$};
\draw[-Triangle]  (kronId) -- (lld) node [midway, right, fill=white] {\footnotesize $\KP_{m,\ell}$};
\draw[Triangle-] (kronId) -- (kronII) node [midway, above, fill=white] {\footnotesize $\I_{\ell,m}=\I_{m,\ell}^{\star}$};
\end{scope}
\end{tikzpicture}
}
\caption{Commutative diagrams of field-independent equivariant isomorphisms and their duals. The map $\Hermite_{m,\ell}$ is Hermite reciprocity, $\Wronski_{m,\ell}$ is the Wronskian isomorphism, $\Hodge_{m,\ell}$ is the Hodge duality, $\KP_{m,\ell}$ is our Kronecker-plethysm isomorphism, and $\I_{m,\ell}$ is our isomorphism between coinvariant and invariant spaces.}
\label{fig:square}
\end{figure}

Let $\oIF$ denote the algebraic closure of~$\IF$, and fix an embedding $\IF\subseteq\oIF$.
We have $\IF^n\subseteq\oIF^n$, $\Sym^m \IF^n \subseteq \Sym^m \oIF^n$, etc.
The group $G_\ell := \GL_\ell(\oIF)\times\GL_\ell(\oIF)\times\GL_2(\oIF)$ acts on the tensor product
$\oIF^{\ell\times\ell\times 2}:=\oIF^\ell \otimes \oIF^\ell \otimes \oIF^2$
via $(g_1,g_2,g_3)(v_1\otimes v_2 \otimes v_3) = g_1(v_1) \otimes g_2(v_2) \otimes g_3(v_3)$ and extended $\oIF$-linearly.
This action lifts to the tensor algebra $\tensor^\bullet(\oIF^{\ell\times\ell\times 2})$
via $g(w_1\otimes w_2 \otimes \cdots \otimes w_d) := (g w_1)\otimes \cdots \otimes (g w_d)$ and extended $\oIF$-linearly,
for $g\in G_\ell$.
This action induces a linear action of $G_\ell$ on $\Sym^\bullet(\oIF^{\ell\times\ell\times 2})$
and on every $\Sym_m(\oIF^{\ell\times\ell\times 2})$.

For any $G_\ell$-representation $W$, let 
$W^{\SL_\ell(\oIF)\times\SL_\ell(\oIF)}$
denote the space of invariants under the action of the group $\SL_\ell(\oIF)\times\SL_\ell(\oIF)$,
interpreted as a subgroup of $G_\ell$ via the embedding $(g_1,g_2) \mapsto (g_1,g_2,\id_{\oIF^2})$.
Embed $\GL_2(\IF)$ into $G_\ell$ via $g\mapsto (\id_{\oIF^\ell},\id_{\oIF^\ell},g)$ and 
define the $\GL_2(\IF)$-representation
\begin{equation}\label{eq:invariants}
(\Sym^\bullet(\IF^{\ell \times\ell \times 2}) )^{\SL_\ell(\oIF)\times\SL_\ell(\oIF)}
\, \ := \ \,
(\Sym^\bullet(\oIF^{\ell\times\ell\times 2}))^{\SL_\ell(\oIF)\times\SL_\ell(\oIF)}
\ \cap \ 
\Sym^\bullet(\IF^{\ell \times\ell \times 2}).
\end{equation}
This is a slightly unusual notation, because the group $\SL_\ell(\oIF)\times\SL_\ell(\oIF)$ does not act on $\Sym^\bullet(\IF^{\ell \times\ell \times 2})$, only on $\Sym^\bullet(\oIF^{\ell \times\ell \times 2})$. But it turns out that this definition lets us
 
establish explicitly the Kronecker-plethysm $\GL_2(\IF)$-isomorphism
\[
\KP_{m,\ell} : \big(\Sym^{\ell m}(\IF^{\ell \times\ell \times 2})\big)^{\SL_\ell(\oIF)\times \SL_\ell(\oIF)}        \ \stackrel{\sim}{\longrightarrow} \      \Sym^m \Sym_\ell \IF^2   .
\]
Previously, the existence of such an isomorphism was only known in characteristic zero, and only via the character theory of the symmetric group, see \cite{PP:13,PP:14}.
We show in \S\ref{sec:computations} via a direct calculation that the algebraic closure in the definition is necessary,
i.e., we give examples where
$\big(\Sym^{\ell m}(\IF^{\ell \times\ell \times 2})\big)^{\SL_\ell(\IF)\times \SL_\ell(\IF)} \ \not\sim \ \Sym^m \Sym_\ell \IF^2$.
 
We also define the dual of the isomorphism $\KP_{m,\ell}$:
\[
\KP_{m,\ell}^\star :    \Sym_m \Sym^\ell \IF^2      \ \stackrel{\sim}{\longrightarrow} \ \big(\Sym_{\ell m}(\IF^{\ell \times\ell \times 2})\big)_{\SL_\ell(\oIF)\times \SL_\ell(\oIF)}    \ ,
\]
where
$\big(\Sym_{\ell m}(\IF^{\ell \times\ell \times 2})\big)_{\SL_\ell(\oIF)\times \SL_\ell(\oIF)}$
is defined as the set of those equivalence classes in
$\big(\Sym_{\ell m}(\oIF^{\ell \times\ell \times 2})\big)_{\SL_\ell(\oIF)\times \SL_\ell(\oIF)}$
that contain a representative in $\Sym_{\ell m}(\IF^{\ell \times\ell \times 2})$, see \S\ref{sec:coinvariants} for a detailed discussion.
This completes the diagram in Figure~\ref{fig:square}.
 
Furthermore, as we see in the diagram we get an explicit isomorphism between a coinvariant space of tensors and the corresponding invariant space:
\[
\I_{m,\ell} : \big(\Sym_{\ell m}(\IF^{\ell \times\ell \times 2})\big)_{\SL_\ell(\oIF)\times \SL_\ell(\oIF)} \to \big(\Sym^{\ell m}(\IF^{m \times m \times 2})\big)^{\SL_m(\oIF)\times \SL_m(\oIF)}
\]
via $\I_{m,\ell} := (\KP_{\ell,m})^{-1}\circ\Hermite_{m,\ell}\circ(\KP_{m,\ell}^\star)^{-1}$.
Again, the existence of such an isomorphism was only known in characteristic zero, and only via the character theory of the symmetric group and Schur-Weyl duality, see for example \cite[Lem.~4.4.7]{Ike:12}.

\begin{lemma}\label{lem:duality}
For a group $G$ and every $G$-module $V$ over a field $\IF$, we have an isomorphism of $G$-modules $(\Sym^nV)^\star\to \Sym_n V^\star$ and $(\Wedge^n V)^\star\cong \Wedge^n V^\star$.
\end{lemma}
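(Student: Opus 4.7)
The plan is to construct, in each case, an explicit $G$-equivariant perfect pairing and read off the isomorphism from it. I assume throughout that $V$ is finite-dimensional, which is the implicit setting in the paper.

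For the symmetric statement, I start from the canonical evaluation pairing
\[
\langle v_1\otimes\cdots\otimes v_n,\ \phi_1\otimes\cdots\otimes\phi_n\rangle \ = \ \prod_{i=1}^n \phi_i(v_i)
\]
on $V^{\otimes n}\times(V^\star)^{\otimes n}$, which is invariant under the diagonal $G$-action and under the diagonal $\aS_n$-action permuting tensor factors on both sides simultaneously. Restricting the second argument to $\Sym_n V^\star = ((V^\star)^{\otimes n})^{\aS_n}$, the pairing descends along the $\aS_n$-coinvariant quotient of the first factor; by definition this quotient is $\Sym^n V$. This yields a $G$-equivariant map $\Sym_n V^\star\to(\Sym^n V)^\star$. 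To see it is an isomorphism, I invoke the standard fact that for any finite group $H$ acting on a finite-dimensional $\IF$-vector space $W$, the annihilator of $\sum_{h\in H}(1-h)W\subseteq W$ inside $W^\star$ is exactly $(W^\star)^H$; applied to $H=\aS_n$ and $W=V^{\otimes n}$ this delivers the claimed isomorphism.

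For the exterior statement, I define a pairing on simple wedges by
\[
\bigl(v_1\wedge\cdots\wedge v_n,\ \phi_1\wedge\cdots\wedge\phi_n\bigr) \ \longmapsto\ \det\bigl([\phi_i(v_j)]_{i,j=1}^n\bigr).
\]
This is well-defined on both factors because the determinant of a matrix with two equal rows or columns vanishes in every characteristic, exactly matching the defining relations of the quotient presentation of $\Wedge^n$. $G$-equivariance on both factors follows from multilinearity of the determinant together with the fact that the contragredient $G$-action on $V^\star$ preserves the underlying evaluation pairing. For non-degeneracy, I fix a basis $e_1,\ldots,e_d$ of $V$ with dual basis $e_1^\star,\ldots,e_d^\star$; in the standard ordered-index wedge bases $\{e_{i_1}\wedge\cdots\wedge e_{i_n}\}_{i_1<\cdots<i_n}$ and $\{e^\star_{j_1}\wedge\cdots\wedge e^\star_{j_n}\}_{j_1<\cdots<j_n}$, the Gram matrix of the pairing is the identity.

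The only point that could be called an obstacle is bookkeeping around characteristic-independence: in positive characteristic, $\Sym^n V$ and $\Sym_n V$ (as well as the quotient versus symmetrization-subspace flavours of $\Wedge^n$) are genuinely different constructions, so one must consistently use the quotient presentations for $\Sym^n$ and $\Wedge^n$ fixed in the paper. With that convention in place, nothing else in the argument depends on the characteristic, since it rests only on field-independent features of evaluation pairings, the determinant, and the coinvariant/invariant duality for finite group actions.
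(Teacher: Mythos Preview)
Your argument is correct. The paper itself does not give a proof at all: it simply cites \cite{McD21} for the symmetric isomorphism and \cite{MW:22} for the exterior one. So in that sense your route is different only because you actually supply the construction rather than delegate it. The perfect-pairing approach you use (evaluation pairing for the symmetric case, determinant pairing for the exterior case) is the standard one, and your care about the coinvariant/invariant duality $\bigl(W/\sum_{h}(1-h)W\bigr)^\star\cong (W^\star)^H$ is exactly what is needed to make the symmetric case go through without dividing by $n!$. Your explicit assumption that $V$ is finite-dimensional is appropriate and is the setting used throughout the paper. What the paper's citation-only proof buys is brevity and an audit trail to the literature; what your proof buys is self-containment and a clear demonstration that nothing in either isomorphism depends on the characteristic, which is precisely the point the paper needs.
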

\begin{proof}
The first isomorphism is proved in \cite[Prop.~3.7]{McD21}, and we generalise it in Proposition~\ref{pro:dual}.
The second isomorphism is proved in \cite[Lem.~3.1]{MW:22}.
\end{proof}

\subsection{Representation theoretic decompositions and \texorpdfstring{$q$}{q}-binomial coefficients}
A \emph{partition} $\la=(\la_1,\ldots,\la_k)$ is a finite list of nonincreasing strictly positive natural numbers.
The number $l(\la)=k$ is called the \emph{length} of $\la$. For $k>l(\la)$, we write $\la_k=0$.
Let $|\la|=\sum_i\la_i$.
We write $\la\vdash_n N$ if $\la$ is a partition with $|\la|=N$ and $l(\la)\leq n$.
We just write $\la\vdash_n$ if $l(\la)\leq n$ with no restriction on~$|\la|$.
We use the notation $(\ell^m)=(\ell,\ell,\ldots,\ell) \vdash \ell m$.
We write $\la\subseteq\mu$ if $\forall i: \la_i\leq \mu_i$.
To every partition $\la$ we associate its Young diagram, which is a top-left justified set of boxes, $\la_i$ boxes in the $i$-th row.
For example, the Young diagram to $\la=(4,2,2,1)$ is
$\ytableausetup{smalltableaux}
\begin{ytableau}
 *(gray) & *(gray) & *(gray) & *(gray) \\
 *(gray) & *(gray) \\
 *(gray) & *(gray) \\
 *(gray)
\end{ytableau}$.
We will always use gray boxes for Young diagrams in this paper, which makes it easier to draw the surrounding grid of potential boxes.
Let $\mathscr P_k(\ell,m) := \{\la \vdash k, \ \la\subseteq (m^\ell)\}$, and let $p_k(\ell,m) := |\mathscr P_k(\ell,m)|$.
Note that $p_k(\ell,m)=p_k(m,\ell)$, which can be seen via transposing the partitions, i.e., reflecting the Young diagram at the main diagonal.
Let $\la^T$ denote the partition to the transposed Young diagram of $\la$, for example $(4,2,2,1)^T=(4,3,1,1)$.

The irreducible polynomial representations $S^\la\IC^n$ of the general linear group $\GL_n(\IC)$ are indexed by partitions $\la\vdash_n$,
 see for example~\cite{FH:13}.
The plethysm coefficient $a_\nu(m[\ell])$ is defined as the multiplicity of $S^\nu\IC^n$ in $\Sym^m \Sym^\ell \IC^n$, which turns out to be independent of $n$, provided $l(\nu)\leq n$.
Finding a sign-free combinatorial interpretation for the plethysm coefficient is Problem 9 in \cite{Sta:00}.

For three partitions $\la\vdash_k d$, $\mu\vdash_\ell d$, $\nu \vdash_n d$, the Kronecker coefficient $\k(\la,\mu,\nu)$ is the multiplicity of $S^\la \IC^k\otimes S^\mu \IC^\ell\otimes S^\nu \IC^n$ in the $\GL_k(\IC)\times\GL_\ell(\IC)\times\GL_n(\IC)$-representation $\Sym^d(\IC^k\otimes\IC^\ell\otimes\IC^n)$.
Finding a sign-free combinatorial interpretation for the Kronecker coefficient is Problem 10 in \cite{Sta:00}.
The special case where $k=\ell$, $d=m\ell$ for some $m$, $\la=\mu=(m^\ell)$, is called the rectangular Kronecker coefficient $\k((m^\ell),(m^\ell),\nu)$.
It equals the multiplicity of $S^\nu\IC^n$ in the $\GL_n(\IC)$-representation $\Sym^d(\IC^\ell\otimes\IC^\ell\otimes\IC^n)^{\SL_\ell\times\SL_\ell}$,
provided $l(\nu)\leq n$.
Although plethysm and Kronecker coefficients seem unrelated at first, several equalities, inequalities, and common constructions are known \cite{Man:11,Ike:12,IP:17,IMW:17,FI:20}.
Both coefficients play an important role in geometric complexity theory, see for example \cite{BLMW11,Bur16,Bur24}.

In this paper, we focus on the case $n=2$.
In this case, both coefficients coincide:
\begin{equation}\label{eq:kab}
\k((m^\ell),(m^\ell),(\ell m-k,k)) = a_{(\ell m -k,k)}(\ell[m]) = b_k(\ell,m),
\end{equation}
where $b_k(\ell,m) = p_k(\ell,m)-p_{k-1}(\ell,m)$.
For the plethysm coefficient, this result can be found in \cite[Cor.~4.2.8]{Stu:08}.
For the Kronecker coefficient, this is proved in \cite{PP:13,PP:14}, and used to prove the strict unimodality of the coefficient sequence of the Gaussian binomial coefficient $\binom{m+\ell}{m}_q = \sum_{n=0}^{\ell m} p_n(\ell,m) q^n$.
The first \emph{combinatorial} proof for $b_k(\ell,m)\geq 0$ was given in \cite{OHa:90}.
Our isomorphism $\KP_{m,\ell}$ is the first explicit isomorphism for \eqref{eq:kab}, even in characteristic zero.

Since $b_k(\ell,m)$ is symmetric in $\ell$ and $m$, we have
$\k((m^\ell),(m^\ell),(\ell m-k,k)) = \k((\ell^m),(\ell^m),(\ell m-k,k))$.
Our isomorphism $\I_{m,\ell}$ gives an explicit isomorphism for this identity.
 
If $\la$ has more than 2 rows, then there are examples for which $\k((m^\ell),(m^\ell),\la)<a_\la(\ell[m])$
and others for which $\k((m^\ell),(m^\ell),\la)>a_\la(\ell[m])$,
for example
$\k((2^2),(2^2),(1^4))=1>0=a_{(1^4)}(2[2])$
and
$\k((3^{12}),(3^{12}),(13^2,2^5))=0<1=a_{(13^2,2^5)}(12[3])$.

\section{Combining symmetric tensors and polynomials}
Elements in the symmetric power $\Sym^m W$ are called polynomials.
In this section we introduce an equivariant product of a polynomial with a symmetric tensor.
For a tensor $t \in \tensor^d W$, we write $[t]_\sym := \sum_{s\in\aS_d t} s$,
where $\aS_d t = \{s \mid \exists \pi: \pi t = s\}$ is the orbit of $t$.
Note that if $t$ has trivial stabilizer under the action of $\aS_d$, then this is the same as $\sum_{\pi\in\aS_d} \pi t$.
There is also an action of $\aS_d$ on $\{1,\ldots,\ell\}^d$ via $\pi(\la_1,\ldots,\la_d)=(\la_{\pi^{-1}(1)},\ldots,\la_{\pi^{-1}(d)})$, and we denote the orbit by $\aS_d \la$. We write $(1^{\mu_1} 2^{\mu_2}\cdots)$ for the list of $d$ numbers that starts with $\mu_1$ many 1s, followed by $\mu_2$ many 2s, and so on.
For three representations $A$, $B$, $C$ of a group $G$, a bilinear map $f:A\times B\to C$ is called $G$-equivariant if $\forall g\in G$, $a\in A$, $b\in B$, we have $f(ga,gb)=gf(a,b)$.

Let $G$ and $H$ be groups, let $V$ be a $G$-representation over a field~$\IF$, and $W$ be an $H$-representation over~$\IF$.
The product group $G\times H$ acts linearly on $\Sym^d V$ via $(g,h)f = gf$,
and $G\times H$ acts linearly on $\Sym_d W$ via $(g,h)t = ht$,
and $G\times H$ acts linearly on $\Sym^d(V \otimes W)$ via $(g,h)((v_{1}\otimes w_{1})\cdots (v_{d}\otimes w_{d})) = (g v_{1})\otimes (h w_{1})\cdots (g v_{d})\otimes (h w_{d})$.
We now define a $G\times H$-equivariant bilinear map
\[
\boxtimes: \Sym^d V \times \Sym_d W \to \Sym^d(V\otimes W).
\]
The construction will be general, and we get our map by instantiating $A = \tensor^d V$, $B = \tensor^d W$, $C = \tensor^d(V\otimes W)$, $S=\aS_d$, $P=G\times H$,
and the bilinear map $\psi: \tensor^d V \times \Sym_d W \to \tensor^d(V\otimes W)$ that
is the restriction of the standard bilinear map $\tensor^d V \times \tensor^d W \to \tensor^d(V\otimes W)$.

In the general situation,
let $S,P$ be groups, let $\mathcal{G} = S\times P$, and let $A$, $B$, $C$ be $\mathcal{G}$-representations. Let $B^S$ denote subspace of $S$-invariants, i.e., the trivial $S$-subrepresentation.
Note that $B^S$ is also a $\mathcal{G}$-representation.
We define the coinvariant space $A_S$ as the quotient
$A_S := A/\langle sw-w\mid w\in A, s\in S\rangle_\IF$.
For $a\in A$ we write $[a] \in A_S$ for the coset of~$a$.
The space of coinvariants $A_S$ is a $\mathcal{G}$-representation on which $S$ acts trivially.
For $p\in P$ we define
\begin{equation}\label{eq:pa}
p[a]=[pa],
\end{equation}
which is well-defined: For $a,a'$ with $[a]=[a']$
we have $[pa] = [pa + pa' -pa'] = [pa'] + [p(a-a')] = [pa']$,
because $[p(a-a')]=[0]$, which can be seen as follows.
$[a]=[a']$ implies $a-a'\in \langle sw-w \mid w \in A,s\in S\rangle$, say $a-a'=\sum_i (s_i w_i - w_i)$,
thus we get $p \sum_i (s_i w_i - w_i) = \sum_i (s_i p w_i - p w_i) \in \langle sw-w \mid w \in A,s\in S\rangle$, therefore $p(a-a')\in \langle sw-w \mid w \in A,s\in S\rangle$, which implies $[p(a-a')]=[0]$.

Let $\psi: A \times B^S \to C$ be a $\mathcal{G}$-equivariant bilinear map,
i.e.,
$\forall g\in \mathcal{G}, a \in A, b \in B^S: \psi(ga,gb)=g(\psi(a,b))$.

Let $b\in B^S$. Then there is an $S$-equivariant linear map $A\to C, \ a \mapsto \psi(a,b)$.
We apply the coinvariant functor to obtain a map $\psi_b : A_S\to C_S$, $[a]\mapsto [\psi(a,b)]$.
Hence, we get a well-defined map
\[
\kappa: B^S \to \Hom(A_S, C_S), \qquad b \mapsto \psi_b.
\]
The map $\kappa$ is linear, because $\kappa(\alpha b+b')([a])=\psi_{\alpha b+b'}([a])=[\psi(a,\alpha b+b')]=[\psi(a,b)]+\alpha [\psi(b')]=(\kappa(b)+\alpha\kappa(b'))[a]$.
This gives the bilinear map
\begin{equation}\label{eq:ASBSCS}
\boxtimes : A_S \times B^S \to C_S, \qquad ([a],b) \mapsto \psi_b(a) = [\psi(a,b)].
\end{equation}
The map $\boxtimes$ is $P$-equivariant, because $p[a]\boxtimes pb \stackrel{\eqref{eq:pa}}{=} [pa]\boxtimes pb =    [\psi(pa,pb)] = [p\psi(a,b)] \stackrel{\eqref{eq:pa}}{=} p [\psi(a,b)]=  p ([a]\boxtimes b)$.

In our special case, we obtain the $G\times H$-equivariant bilinear map
\[
\boxtimes: \Sym^d V \times \Sym_d W \to \Sym^d(V\otimes W),
\]
and \eqref{eq:ASBSCS} directly gives an interpretation on standard basis vectors as
\begin{equation}\label{eq:defboxtimes}
(x_{i_1} \cdots x_{i_d}) \boxtimes [z_1^{\otimes \mu_1} \otimes z_2^{\otimes \mu_2} \otimes \cdots]_{\sym}
 \ = \ 
\sum_{(s_1,\ldots,s_d)\in\aS_d(1^{\mu_1} 2^{\mu_2}\cdots)}
(x_{i_1}\otimes z_{s_1}) \cdots (x_{i_d}\otimes z_{s_d}).
\end{equation}

\section{The ring of invariants}
In this section we determine the invariant ring $(\Sym^{\bullet}(\IF^{\ell\times\ell\times2}))^{\SL_\ell(\oIF)\times \SL_\ell(\oIF)}$.

Let $\{x,y\}$ be a basis of $\IF^2$.
We write $x^ky^{\ell-k}$ for the coset $x^{\otimes k}\otimes y^{\otimes (\ell-k)} + \langle x\otimes y-y\otimes x \rangle$ in $\Sym^\ell\IF^2$.
The basis vectors of $\Sym_\ell \IF^2$ have the form 
\[
F(k) := [x^{\otimes k}\otimes y^{\otimes (\ell-k)}]_\sym
\]
for $0\leq k\leq \ell$.
For $\ell\in\IN$ let
\[
\det_{\ell,\IF}=\sum_{\pi\in\aS_\ell}\sgn(\pi)\prod_{i=1}^\ell \,x_{i,\pi(i)} \ \in \ \Sym^\ell(\IF^{\ell\times \ell})
\]
denote the determinant polynomial.
Clearly $\det_{\ell,\oIF} \in \Sym^{\ell}(\oIF^{\ell\times\ell})^{\SL_\ell(\oIF)\times \SL_\ell(\oIF)}$.
Recall the inclusion $\Sym^\ell(\IF^{\ell\times \ell})\subseteq\Sym^\ell(\oIF^{\ell\times \ell})$.
Since all coefficients of $\det_{\ell,\oIF}$ are in $\IF$, $\det_{\ell,\oIF} \in \Sym^\ell(\IF^{\ell\times \ell})$.
Therefore,
\begin{equation}\label{eq:detinv}
\det_{\ell,\oIF} \in \Sym^{\ell}(\IF^{\ell\times\ell})^{\SL_\ell(\oIF)\times \SL_\ell(\oIF)}.
\end{equation}
We use the short notation $\det_\ell := \det_{\ell,\oIF} = \det_{\ell,\IF}$.

\begin{definition}
For $0\leq k\leq \ell$
we define $M_\ell(k)\in \Sym^{\ell}(\IF^{\ell\times \ell\times 2})$ via
$
M_\ell(k) := \det_\ell \boxtimes F(k).
$
\end{definition}
We remark that $M_\ell(k)$ is the coefficient of $t^k$ in $\det(tX + Y)$, where $X=(x_{i,j})_{1\leq i,j\leq \ell}$ and $Y=(y_{i,j})_{1\leq i,j\leq \ell}$ are variable matrices.

\begin{claim}
$M_\ell(k) \in (\Sym^{\ell}(\IF^{\ell\times\ell\times 2}))^{\SL_\ell(\oIF)\times \SL_\ell(\oIF)}$.
\end{claim}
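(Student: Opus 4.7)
The plan is to derive the claim as a short corollary of Proposition~\ref{pro:boxtimesequivariant} and the invariance property \eqref{eq:detinv} of the determinant. The key observation is that the subgroup $\SL_\ell(\oIF)\times\SL_\ell(\oIF)\subseteq G_\ell$ acts on $\oIF^{\ell\times\ell\times 2}=\oIF^{\ell\times\ell}\otimes\oIF^2$ by its standard action on the first tensor factor $\oIF^{\ell\times\ell}$ and trivially on the second factor $\oIF^2$. Hence its action on $\Sym^{\ell}(\oIF^{\ell\times\ell\times 2})$ is exactly the $(G\times H)$-action on $\Sym^{\ell}(V\otimes W)$ appearing in Proposition~\ref{pro:boxtimesequivariant}, taken with $V=\oIF^{\ell\times\ell}$, $W=\oIF^2$, $G=\SL_\ell(\oIF)\times\SL_\ell(\oIF)$, and $H$ trivial.

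I would then invoke Proposition~\ref{pro:boxtimesequivariant} to conclude that $\boxtimes$ sends $(G\times H)$-invariants to $(G\times H)$-invariants. Since \eqref{eq:detinv} shows that $\det_\ell$ is $G$-invariant, and $F(k)$ is trivially $H$-invariant (because $H$ is trivial), the element $M_\ell(k)=\det_\ell\boxtimes F(k)$ is automatically $\SL_\ell(\oIF)\times\SL_\ell(\oIF)$-invariant inside $\Sym^{\ell}(\oIF^{\ell\times\ell\times 2})$.

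To finish, I would verify that $M_\ell(k)$ already belongs to the $\IF$-subspace $\Sym^{\ell}(\IF^{\ell\times\ell\times 2})$, so that it indeed lies in the intersection defining the invariant space in \eqref{eq:invariants}. This is immediate from the definition \eqref{eq:defboxtimes} of $\boxtimes$, whose right-hand side uses only $0/1$ integer coefficients in the standard monomial basis, together with the fact that the coefficients of $\det_\ell$ and of $F(k)$ on the standard bases of $\Sym^{\ell}(\IF^{\ell\times\ell})$ and $\Sym_\ell\IF^2$ are already in $\IF$. There is no substantive obstacle: the work has essentially been done in proving the equivariance of $\boxtimes$ and in recording \eqref{eq:detinv}, and the claim is a direct combination of the two.
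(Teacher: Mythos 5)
Your argument is correct and matches the paper's proof: both invoke Proposition~\ref{pro:boxtimesequivariant} with $G=\SL_\ell(\oIF)\times\SL_\ell(\oIF)$ acting on $V=\oIF^{\ell\times\ell}$, $H$ trivial on $W=\oIF^2$, combined with the $G$-invariance of $\det_\ell$ from~\eqref{eq:detinv} to conclude $(g,1)M_\ell(k)=(g\det_\ell)\boxtimes F(k)=M_\ell(k)$. Your extra remark that $M_\ell(k)$ has coefficients in $\IF$ is correct but already built into the paper's definition of $M_\ell(k)$ as an element of $\Sym^\ell(\IF^{\ell\times\ell\times 2})$.
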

\begin{proof}
Let $G = \SL_\ell(\oIF)\times \SL_\ell(\oIF)$.
By \eqref{eq:detinv} we have that $\det_\ell$ is $G$-invariant.
Let $H = \{1\}$ be the trivial subgroup of $\GL_2(\oIF)$.
By the $G\times H$-equivariance of $\boxtimes$, for $(g,1)\in G\times H$ we have
$(g,1) M_\ell(k) = (g \det_\ell) \boxtimes F(k) = \det_\ell \boxtimes F(k) = M_\ell(k)$,
because $\det_\ell$ is $G$-invariant.
\end{proof}

The next theorem is known over $\IC$, for example via quiver representations, see \cite{derksen2017introduction},\cite{SW2000}.

\begin{theorem} \label{thm:basis}
The algebra
$(\Sym^{\bullet}(\IF^{\ell\times\ell\times2}))^{\SL_\ell(\oIF)\times \SL_\ell(\oIF)}$
is generated by the set
$\left\{ M_\ell(k) \mid 0\leq k \leq \ell\right\}$.
\end{theorem}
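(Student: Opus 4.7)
The plan is to reduce the claim to the classical First Fundamental Theorem for $\SL_\ell$-invariants of matrices, followed by an analysis of how the second copy of $\SL_\ell$ acts on the resulting minors. First, unwinding the definition of $\boxtimes$ and viewing an element of $\IF^{\ell\times\ell\times 2}$ as a pair of $\ell\times\ell$ matrices $(A,B)$ (the two slices along the third axis), I would show that $M_\ell(k)$ equals the coefficient of $x^k y^{\ell-k}$ in $\det(xA+yB)\in \IF[A_{ij},B_{ij}][x,y]$. From this explicit form, invariance under $\SL_\ell(\oIF)\times\SL_\ell(\oIF)$ becomes transparent since $\det(g_1(xA+yB)g_2)=\det(g_1)\det(g_2)\det(xA+yB)=\det(xA+yB)$.

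Next, a central-torus argument narrows attention to degrees divisible by $\ell$: since $\GL_\ell$ normalizes $\SL_\ell$ with multiplicative quotient via $\det$, every $\SL_\ell$-invariant in a rational $\GL_\ell$-module decomposes as a sum of $\GL_\ell$-semi-invariants of weights $\det^a$, $a\in\IZ$. Matching the action of the scalar center $\{\lambda I\}$ (by $\lambda^d$ on $\Sym^d$ versus $\lambda^{a\ell}$ via $\det^a$) forces $d=a\ell$. To establish generation in the relevant degrees, I would then apply the characteristic-free First Fundamental Theorem (De~Concini--Procesi): the $\SL_\ell$-invariants of $\IF[A_{ij},B_{ij}]$ under the first factor are generated by the $\ell\times\ell$ minors of the concatenated matrix $[A|B]\in\IF^{\ell\times 2\ell}$. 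These minors span an $\SL_\ell$-submodule for the second factor isomorphic to $\Wedge^\ell(\IF^\ell\oplus\IF^\ell)\cong\bigoplus_{k=0}^\ell \Wedge^k\IF^\ell\otimes\Wedge^{\ell-k}\IF^\ell$, and in each summand $M_\ell(k)$ is the distinguished invariant coming from the pairing $\Wedge^k\otimes\Wedge^{\ell-k}\to\Wedge^\ell\cong\IF$.

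The main obstacle is promoting this linear identification to algebraic generation across all degrees. In characteristic zero this follows from complete reducibility of $\SL_\ell$-representations combined with a Reynolds-operator argument. In positive characteristic I would either invoke the characteristic-free description of semi-invariants of the Kronecker quiver due to Schofield--van~den~Bergh and Domokos--Zubkov, which directly identifies the generators of the semi-invariant ring of $\GL_\ell\times\GL_\ell$ acting on pairs of $\ell\times\ell$ matrices as the coefficients of $\det(xA+yB)$; or alternatively, carry out a Hilbert-series comparison, matching the bigraded Poincaré series of the invariant ring (by total degree and $\GL_2$-weight) with that of the polynomial ring $\IF[M_\ell(0),\ldots,M_\ell(\ell)]$, having first verified the algebraic independence of the $M_\ell(k)$ by specializing to an explicit pair such as $A=\text{diag}(t_1,\ldots,t_\ell)$, $B=I$.
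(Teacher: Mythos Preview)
Your proposal is correct in outline, but takes a route quite different from the paper's. You reduce to the characteristic-free semi-invariant theory of the Kronecker quiver (Schofield--van~den~Bergh, Domokos--Zubkov), which indeed yields the result as a black box; note that once you cite this, the preceding FFT-plus-second-factor analysis is largely redundant, since those references already handle both copies of $\SL_\ell$ simultaneously. The paper instead gives a self-contained elementary argument: it defines an evaluation homomorphism $\Phi(p)=p(\nu\,\mathbb{I}_\ell,\Diag(\mu_1,\ldots,\mu_\ell))$, proves injectivity by a Zariski-density argument over $\oIF$ (diagonalize the pair on a dense open), and then pins down the image by showing $\Phi(p)$ is symmetric in the $\mu_i$ with each $\deg_{\mu_i}\le d/\ell$, forcing it into $\IF[\nu^k e_{\ell-k}]$. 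Your approach buys brevity at the cost of heavy citations; the paper's buys a proof that works uniformly over any field with no external input beyond basic facts about symmetric polynomials.

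One caution on your fallback: the Hilbert-series comparison is not a viable alternative in positive characteristic as stated. You would need an independent computation of the Poincar\'e series of the invariant ring, but Molien-type formulas are unavailable, and without already knowing generation you have no a~priori upper bound to match against the polynomial ring in the $M_\ell(k)$. A reduction-mod-$p$ argument from $\IZ$ could be made to work, but this requires establishing flatness of the invariant ring over $\IZ$, which is itself nontrivial and essentially re-imports the quiver machinery.
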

The rest of this section is dedicated to proving this theorem.
We interpret $\IF^{\ell\times\ell\times 2}=\IF^{\ell\times\ell}\oplus\IF^{\ell\times\ell}$ as a space of pairs of matrices.
Let $\mathbb{I}_\ell$ denote the $\ell\times\ell$ identity matrix, and let $\Diag(\mu_1,\ldots,\mu_\ell)$ denote the $\ell\times\ell$ diagonal matrix with $\mu_i$ on the main diagonal.
We start by defining a homomorphism of graded $\IF$-algebras:
\begin{align*}
\Phi : (\Sym^{\bullet}(\IF^{\ell\times\ell\times2}))^{\SL_\ell(\oIF)\times \SL_\ell(\oIF)}
&\to \IF[\mu_1,\dots,\mu_\ell,\nu],
\\
p &\mapsto p(\nu \mathbb{I}_\ell,\Diag(\mu_1,\ldots,\mu_\ell)).
\end{align*}
Let $e_k$ denote the $k$-th elementary symmetric polynomial in the variables $\mu_1,\ldots,\mu_\ell$.
By definition, we have
\begin{equation}\label{eq:PhiMe}
\Phi(M_\ell(k)) = \nu^{k} e_{\ell-k}.
\end{equation}
We see that
\begin{equation}\label{eq:Mkalgindep}
\textup{$\{M_\ell(k)\mid 0\leq k\leq \ell\}$ are algebraically independent},
\end{equation}
because if
there exists a polynomial $P$ with $P(e_0 \nu^{\ell},\ldots,e_\ell \nu^{0})=0$, then there exists $P'$ with $P'(\nu,e_1,\ldots,e_\ell)=0$, but the $e_k$ are algebraically independent (\cite[Thm.~8.2]{Lan05}), and $\nu$ is a variable unused by the $e_k$.

\begin{claim}
\label{cla:inj}
$\Phi$ is injective.
\end{claim}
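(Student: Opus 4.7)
The plan is to view $p \in (\Sym^\bullet(\IF^{\ell\times\ell\times 2}))^{\SL_\ell(\oIF)\times \SL_\ell(\oIF)}$ as a polynomial function in the entries of a pair of $\ell\times\ell$ matrices $(A,B)$, evaluated at $\oIF$-points. Under the identification $\oIF^{\ell\times\ell}\cong\oIF^\ell\otimes\oIF^\ell$ as a space of matrices, the tensor action of $(g_1,g_2)\in\SL_\ell(\oIF)\times\SL_\ell(\oIF)$ becomes $(A,B)\mapsto(g_1 A g_2^T, g_1 B g_2^T)$. I would let $Z\subseteq\oIF^{\ell\times\ell}\oplus\oIF^{\ell\times\ell}$ be the $(\ell+1)$-dimensional subvariety of pairs $(\nu\mathbb{I}_\ell,\Diag(\mu_1,\dots,\mu_\ell))$. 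The vanishing of $\Phi(p)$ is exactly the statement that $p$ vanishes identically on $Z$, and by $(\SL_\ell\times\SL_\ell)(\oIF)$-invariance $p$ then vanishes on the whole orbit $G\cdot Z$. The injectivity of $\Phi$ will follow once $G\cdot Z$ is shown to be Zariski dense in $\oIF^{\ell\times\ell}\oplus\oIF^{\ell\times\ell}$, because a polynomial in $\IF[X]\subseteq\oIF[X]$ that vanishes on a dense set of $\oIF$-points is zero.

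To prove density of $G\cdot Z$, I would exhibit an explicit normal form on a Zariski-open subset. Consider $(A,B)$ with $A$ invertible and $A^{-1}B$ possessing $\ell$ pairwise distinct eigenvalues; these conditions cut out a dense open set, since both are complements of the vanishing loci of polynomials (the determinant of $A$ and the discriminant of the characteristic polynomial of $A^{-1}B$). Over $\oIF$ diagonalize $A^{-1}B = P\,\Diag(\lambda_1,\dots,\lambda_\ell)P^{-1}$, and use algebraic closure to rescale $P$ so that $\det(P)=1$. Pick any $\nu\in\oIF$ with $\nu^\ell=\det(A)$, and set $g_2^T := P$, $g_1 := \nu(AP)^{-1}$. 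A direct computation gives $g_1 A g_2^T = \nu\mathbb{I}_\ell$ and $g_1 B g_2^T = \nu P^{-1}(A^{-1}B)P = \Diag(\nu\lambda_1,\dots,\nu\lambda_\ell)$, and the determinant conditions $\det(g_2)=\det(P)=1$ and $\det(g_1)=\nu^\ell/\det(A)=1$ ensure $(g_1,g_2)\in\SL_\ell(\oIF)\times\SL_\ell(\oIF)$.

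Putting the two steps together: for every $(A,B)$ in this dense open subset we have $p(A,B)=p(\nu\mathbb{I}_\ell,\Diag(\nu\lambda_1,\dots,\nu\lambda_\ell))=0$ by invariance together with $\Phi(p)=0$. Hence $p$ vanishes on a Zariski-dense open subset of $\oIF^{\ell\times\ell}\oplus\oIF^{\ell\times\ell}$, so $p$ is the zero polynomial, proving $\ker\Phi=0$.

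The main obstacle is ensuring the transformation can be carried out \emph{inside} $\SL_\ell\times\SL_\ell$ and not merely $\GL_\ell\times\GL_\ell$; this is precisely what forces the use of $\oIF$, namely extracting an $\ell$-th root of $\det(A)$ and normalizing $\det(P)=1$. The rest of the argument is a standard density/orbit argument, and I would not dwell on the Zariski-density step beyond pointing out that the complement is cut out by the vanishing of $\det(A)$ and the discriminant of the characteristic polynomial of $A^{-1}B$.
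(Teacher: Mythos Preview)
Your proof is correct and follows essentially the same approach as the paper: reduce a generic pair $(A,B)$ to the normal form $(\nu\,\mathbb{I}_\ell,\Diag(\cdots))$ via an $\SL_\ell(\oIF)\times\SL_\ell(\oIF)$-move (using an $\ell$-th root of $\det A$ and a diagonalization), then invoke Zariski density. The only cosmetic difference is that the paper phrases the distinct-eigenvalue condition via the discriminant of the characteristic polynomial of $\adj(A)\,B$ rather than of $A^{-1}B$, so that it is manifestly a polynomial condition on the pair $(A,B)$.
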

\begin{proof}
The map $\Phi$ depends on the field $\IF$, so we write $\Phi_\IF$ to be precise.
Note that for every $p\in (\Sym^{\bullet}(\IF^{\ell\times\ell\times 2}))^{\SL_\ell(\oIF)\times \SL_\ell(\oIF)}
\subseteq
(\Sym^{\bullet}(\oIF^{\ell\times\ell\times 2}))^{\SL_\ell(\oIF)\times \SL_\ell(\oIF)}$
we have $\Phi_{\IF}(p)=\Phi_{\oIF}(p)$.
This implies $\ker(\Phi_\IF)\subseteq \ker(\Phi_\oIF)$.
Let $p \in \ker(\Phi_\IF) \subseteq \ker(\Phi_\oIF)$.
We interpret $p$ as a polynomial on $\oIF^{\ell\times\ell\times 2} = \oIF^{\ell\times\ell}\oplus\oIF^{\ell\times\ell}$.
We study the evaluation $p(X,Y)$, $(X,Y) \in \oIF^{\ell\times\ell}\oplus\oIF^{\ell\times\ell}$.
As a first step, consider the case where $X$ has full rank,
and let $\nu\in\oIF$ such that $\nu^\ell=\det(X)$. Note that $\nu \, X^{-1} \in \SL_\ell(\oIF)$.
Consider the subcase where $\nu \, X^{-1} Y$ has distinct eigenvalues in $\oIF$ (it follows that $\nu \, X^{-1} Y$ is diagonalizable over $\oIF$),
and let $A^{-1} \nu \, X^{-1} Y A = \Diag(\mu_1,\ldots,\mu_\ell)$ for $A \in \SL_\ell(\oIF)$.
Let $\mathbb{I} = \mathbb{I}_\ell$.
Using the invariance of $p$, we see that
\begin{eqnarray*}
p(X,Y) &=&
p(\nu\, X^{-1} X,\nu\, X^{-1} Y)
=
p(\nu\, \mathbb{I}, \nu\, X^{-1}Y)
\\
&=&
p(A^{-1}\nu\, \mathbb{I}\, A, A^{-1}\nu\, X^{-1} YA)
=
p(\nu\, \mathbb{I}, \Diag(\mu_1,\ldots,\mu_\ell)) \stackrel{p \in \ker(\Phi_{\oIF})}{=} 0.
\end{eqnarray*}
The full rank condition of $X$ is a Zariski-open condition on $\oIF^{\ell\times\ell\times 2}$.
The matrix $\nu X^{-1} Y$ has distinct eigenvalues if and only if $\adj(X) Y$ has distinct eigenvalues, where $\adj(X)$ is the adjugate matrix of $X$, i.e., the transpose of the cofactor matrix.
But $\adj(X) Y$ has distinct eigenvalues if and only if the characteristic polynomial of $\adj(X) Y$ does not have a repeated root,
which happens if and only if the discriminant (the determinant of the Sylvester matrix of the polynomial and its derivative) of the characteristic polynomial of $\adj(X) Y$ does not vanish.
This is also a Zariski-open condition on $\oIF^{\ell\times\ell\times 2}$.
Hence, $p(X,Y)=0$ for a Zariski-dense subset of $\oIF^{\ell\times\ell\times 2}$, and thus $p=0$ is the zero polynomial.
\end{proof}

Since $\Phi$ is injective, we have the isomorphism $(\Sym^{\bullet}(\IF^{\ell\times\ell\times 2}))^{\SL_\ell(\oIF)\times \SL_\ell(\oIF)}\cong \Im(\Phi)$. The next goal is to determine $\Im(\Phi)$, see Claim~\ref{cla:imageelemsym} below. We start with some observations.
\begin{claim}\label{cla:multipleofell}
For every $p \in (\Sym^{\bullet}(\IF^{\ell\times\ell\times 2}))^{\SL_\ell(\oIF)\times \SL_\ell(\oIF)}$, we have that $d:=\deg(p)$ is a multiple of $\ell$.
\end{claim}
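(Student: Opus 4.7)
Since the $SL_\ell(\oIF)\times SL_\ell(\oIF)$-action preserves the degree grading on $\Sym^\bullet$, each homogeneous component of $p$ is itself invariant, so I may assume $p$ is homogeneous of degree $d$ and nonzero. The plan is to turn invariance under a single one-parameter subgroup of $SL_\ell\times SL_\ell$ into a ``row-balance'' condition that directly yields $\ell\mid d$.

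Viewing $\IF^{\ell\times\ell\times 2}=\IF^{\ell\times\ell}\oplus\IF^{\ell\times\ell}$ as a pair of matrices $(X,Y)$, the action of $(g_1,g_2)\in\GL_\ell(\oIF)\times\GL_\ell(\oIF)$ reads $(X,Y)\mapsto(g_1Xg_2^T,g_1Yg_2^T)$. For distinct $i,j\in\{1,\ldots,\ell\}$ and $t\in\oIF^\times$, the diagonal matrix $D_{i,j}(t)$ with $t$ in position $i$, $t^{-1}$ in position $j$, and $1$ elsewhere lies in $SL_\ell(\oIF)$. Hence $(D_{i,j}(t),\mathbb{I}_\ell)\in SL_\ell\times SL_\ell$ rescales $X_{ab}$ and $Y_{ab}$ by $t$ if $a=i$, by $t^{-1}$ if $a=j$, and trivially otherwise. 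To each monomial $m=\prod_{a,b}X_{ab}^{\alpha_{ab}}Y_{ab}^{\beta_{ab}}$ of $\Sym^d(\IF^{\ell\times\ell\times 2})$ I attach the row-weight $c_a(m):=\sum_b(\alpha_{ab}+\beta_{ab})$; note $\sum_ac_a(m)=d$.

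Under $(D_{i,j}(t),\mathbb{I}_\ell)$, $m$ is scaled by $t^{c_i(m)-c_j(m)}$. Decomposing $p$ into weight spaces for this one-parameter subgroup, each weight component is again $SL_\ell\times SL_\ell$-invariant, so it vanishes unless $t^{c_i(m)-c_j(m)}=1$ for every $t\in\oIF^\times$. Since $\oIF^\times$ is infinite, this forces $c_i(m)=c_j(m)$ for every monomial appearing in $p$; running the argument over all pairs $i\neq j$ gives $c_1(m)=\cdots=c_\ell(m)=:c$, whence $d=\sum_ac_a(m)=\ell c$ and $\ell\mid d$.

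The one point requiring care is that the argument be characteristic-free. The tempting one-liner tests $p$ against the central element $\zeta\mathbb{I}_\ell\in SL_\ell(\oIF)$ for an $\ell$-th root of unity $\zeta$ and deduces $\zeta^d=1$; but when $\mathrm{char}(\IF)=p$ divides $\ell$, the field $\oIF$ contains only $\ell/p^v$ many $\ell$-th roots of unity (with $p^v$ the largest power of $p$ dividing $\ell$), so this only yields $\ell/p^v\mid d$. Replacing the central $\ell$-th roots of unity by the noncentral one-parameter subgroups $\{D_{i,j}(t):t\in\oIF^\times\}$ removes this obstacle, since $\oIF^\times$ is always infinite for an algebraic closure.
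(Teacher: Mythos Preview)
Your approach is essentially the paper's: both use the diagonal one-parameter subgroups $D_{i,j}(t)=\Diag(\ldots,t,\ldots,t^{-1},\ldots)\in\SL_\ell(\oIF)$ acting on rows to force all row-weights $c_a(m)$ of the monomials of $p$ to agree, whence $d=\ell c$. The paper first restricts $p$ to pairs of diagonal matrices and then runs this torus argument on the restriction; you work directly with the full polynomial, which is marginally cleaner since it avoids any need to check that the diagonal restriction is nonzero.

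One sentence needs correcting. You write ``each weight component is again $\SL_\ell\times\SL_\ell$-invariant'', but this is false: weight spaces for a one-parameter torus are not stable under the full group, and nothing in your setup shows that the projection of an invariant onto a torus-weight space remains invariant. The argument you actually need (and presumably intend) is simpler: $p$ itself is fixed by each $(D_{i,j}(t),\mathbb{I}_\ell)$, so $\sum_w p_w=\sum_w t^{w}p_w$ for every $t\in\oIF^\times$; since the $p_w$ are supported on disjoint sets of monomials and $\oIF^\times$ is infinite, $(t^{w}-1)p_w=0$ for all $t$ forces $p_w=0$ whenever $w\neq 0$. With that rephrasing your proof is complete. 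Your closing remark explaining why the central $\ell$-th-root-of-unity shortcut fails when $\mathrm{char}(\IF)\mid\ell$ is a useful observation that the paper does not spell out.
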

\begin{proof}
Define $q(\kappa_1,\dots,\kappa_\ell,\mu_1,\dots,\mu_\ell) := p\big(\Diag(\kappa_1,\dots,\kappa_\ell),\Diag(\mu_1,\dots,\mu_\ell)\big)$.
For any $\alpha\in\oIF$, let $A_\alpha := \Diag(\alpha,\alpha^{-1},1,\dots,1) \in \SL_\ell(\oIF)$.
Due to the invariance of $p$, we have
\begin{align*}
q(\kappa_1,\dots,\kappa_\ell,\mu_1,\dots,\mu_\ell) &= 
p\big(A_\alpha\Diag(\kappa_1,\dots,\kappa_\ell),A_\alpha\Diag(\mu_1,\dots,\mu_\ell)\big)
\\
&=
p\big(\Diag(\alpha\kappa_1,\alpha^{-1}\kappa_2,\kappa_3,\dots,\kappa_\ell),\Diag(\alpha\mu_1,\alpha^{-1}\mu_2,\mu_3,\dots,\mu_\ell)\big)
\\
&=
q(\alpha\kappa_1,\alpha^{-1}\kappa_2,\kappa_3,\dots,\kappa_\ell,\alpha \mu_1,\alpha^{-1}\mu_2,\mu_3,\dots,\mu_\ell).
\end{align*}
By the equation above, since $\oIF$ is infinite, in every monomial $m$ of $q$ we have $\deg_{\kappa_1}(m)+\deg_{\mu_1}(m)=\deg_{\kappa_2}(m)+\deg_{\mu_2}(m)$.
Analogously, for all $j$, we have $\deg_{\kappa_1}(m)+\deg_{\mu_1}(m)=\deg_{\kappa_j}(m)+\deg_{\mu_j}(m)$.
Hence,
\[
d = {\textstyle\sum_{j=1}^\ell}
\big(\deg_{\kappa_j}(m)
+
\deg_{\mu_j}(m)\big)
= \ell (
\deg_{\kappa_1}(m)
+
\deg_{\mu_1}(m)
).
\qedhere
\]
\end{proof}
\begin{claim}\label{cla:symmetricinmu}
$\Phi(p)$ is symmetric in the variables $\mu_1,\ldots,\mu_\ell$.
\end{claim}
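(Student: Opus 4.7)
The plan is to exhibit, for any permutation $\sigma \in \aS_\ell$, an element of $\SL_\ell(\oIF)$ whose conjugation action permutes the diagonal entries of $\Diag(\mu_1,\ldots,\mu_\ell)$ according to $\sigma$ while fixing $\nu \mathbb{I}_\ell$, and then apply the invariance of $p$ exactly as in the proof of Claim~\ref{cla:inj}. The natural candidate is the permutation matrix $P_\sigma$, for which $P_\sigma \Diag(\mu_1,\ldots,\mu_\ell) P_\sigma^{-1}$ permutes the diagonal entries and $P_\sigma \nu \mathbb{I}_\ell P_\sigma^{-1} = \nu \mathbb{I}_\ell$. The only obstacle is that $\det P_\sigma = \sgn(\sigma)$, so $P_\sigma \notin \SL_\ell(\oIF)$ when $\sigma$ is odd.

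To correct the determinant without disturbing the conjugation action on diagonal matrices, I would set $Q := \Diag(\sgn(\sigma),1,\ldots,1)$ and $g := P_\sigma Q$. Then $\det(g) = \sgn(\sigma)^2 = 1$, so $g \in \SL_\ell(\oIF)$. Since $Q$ is diagonal, it commutes with every diagonal matrix, and $Q^2 = \mathbb{I}_\ell$, so the two occurrences of $Q$ in $g \Diag(\mu) g^{-1} = P_\sigma Q \Diag(\mu) Q^{-1} P_\sigma^{-1}$ cancel, yielding
\[
g \, \Diag(\mu_1,\ldots,\mu_\ell) \, g^{-1} \;=\; P_\sigma \, \Diag(\mu_1,\ldots,\mu_\ell) \, P_\sigma^{-1} \;=\; \Diag(\mu_{\sigma^{-1}(1)},\ldots,\mu_{\sigma^{-1}(\ell)}),
\]
while still $g \, (\nu \mathbb{I}_\ell) \, g^{-1} = \nu \mathbb{I}_\ell$.

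Finally, invoking the invariance of $p$ under $(g,g) \in \SL_\ell(\oIF) \times \SL_\ell(\oIF)$ (precisely the conjugation step used in the middle of the proof of Claim~\ref{cla:inj}) gives
\[
\Phi(p)(\mu_1,\ldots,\mu_\ell,\nu) \;=\; \Phi(p)(\mu_{\sigma^{-1}(1)},\ldots,\mu_{\sigma^{-1}(\ell)},\nu).
\]
Since $\sigma \in \aS_\ell$ was arbitrary, $\Phi(p)$ is symmetric in $\mu_1,\ldots,\mu_\ell$. I expect no real obstacle: the only nontrivial point is the parity-correction trick via $Q$ to land in $\SL_\ell$ rather than $\GL_\ell$; everything else is bookkeeping.
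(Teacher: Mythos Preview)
Your argument is correct and essentially identical to the paper's own proof: the paper also multiplies a permutation matrix by $\Diag(-1,1,\ldots,1)$ to force the product into $\SL_\ell(\oIF)$ and then conjugates $(\nu\mathbb{I}_\ell,\Diag(\mu))$ by it. The only cosmetic difference is that the paper treats just transpositions $(i\,j)$ (which suffice since they generate $\aS_\ell$) and writes the sign-fixing diagonal on the left rather than the right, while you handle a general $\sigma$ directly; one minor wording issue is that the element of $\SL_\ell(\oIF)\times\SL_\ell(\oIF)$ realizing conjugation is $(g,(g^{-1})^{T})$ rather than literally $(g,g)$, but your reference to the conjugation step in Claim~\ref{cla:inj} makes clear you mean the right thing.
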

\begin{proof}
For each transposition $\sigma = (i \, j) \in \aS_\ell$ we have that the product $Q_{i,j}:=\Diag(-1,1,1,\ldots,1)\cdot P_{(i \, j)} \in \SL_\ell(\oIF)$, where $P_{(i \, j)}$ is the permutation matrix of $(i \, j)$. Hence, due to the invariance of $p$ we have
\begin{align*}
\Phi(p)(\nu,\mu_1,\dots,\mu_\ell)
&=
p\big(\nu\, \mathbb{I},\mathsf{Diag}(\mu_1,\dots,\mu_\ell)\big)= p\big(\nu\cdot Q_{i,j}\, \mathbb{I}\, Q_{i,j}^{-1},Q_{i,j}\Diag(\mu_1,\dots,\mu_\ell)Q_{i,j}^{-1}\big)
\\
&=p(\nu\, \mathbb{I},\mathsf{Diag}(\mu_{\sigma(1)},\dots,\mu_{\sigma(\ell)}))= \Phi(p)(\nu,\mu_{\sigma(1)},\dots,\mu_{\sigma(\ell)}).\qedhere
\end{align*}
\end{proof}
Combining Claim~\ref{cla:multipleofell} and Claim~\ref{cla:symmetricinmu}, it follows that
\begin{equation}\label{eq:degmui}
\textstyle\deg_{\mu_i}(p)\leq\frac{d}{\ell}.
\end{equation}
We are now ready to determine the image of $\Phi$.
\begin{claim}
\label{cla:imageelemsym}
$\Im(\Phi) = \IF\left[\nu^k e_{\ell-k}\mid 0\leq k \leq \ell\right]$.
\end{claim}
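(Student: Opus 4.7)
The plan is to prove both inclusions separately. The inclusion $\IF[\nu^k e_{\ell-k} \mid 0\leq k\leq \ell] \subseteq \Im(\Phi)$ is immediate: since $\Phi$ is an algebra homomorphism and $\Phi(M_\ell(k)) = \nu^k e_{\ell-k}$ by \eqref{eq:PhiMe}, the subalgebra generated by the $\nu^k e_{\ell-k}$ lies in $\Im(\Phi)$. The reverse inclusion is the substance of the claim, and I would establish it by a graded dimension count.

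By Claim~\ref{cla:multipleofell} the invariant ring is concentrated in degrees $d = m\ell$, so it suffices to prove $\Im(\Phi)_{m\ell} = \IF[\nu^k e_{\ell-k}]_{m\ell}$ for every $m\geq 0$. For the upper bound, Claim~\ref{cla:symmetricinmu} and \eqref{eq:degmui}, together with the homogeneity of $\Phi(p)$, imply that $\Phi(p)$ lies in the space $S_m$ of polynomials in $\IF[\nu,\mu_1,\dots,\mu_\ell]$ that are homogeneous of total degree $m\ell$, symmetric in the $\mu_i$, and satisfy $\deg_{\mu_i}\leq m$ for all $i$. Splitting by $\nu$-degree and expanding each symmetric coefficient in the monomial symmetric polynomial basis $\{m_\lambda\}$, and using that $\deg_{\mu_i} m_\lambda = \lambda_1$ together with the fact that distinct $m_\lambda$ have disjoint monomial supports, the constraint $\deg_{\mu_i}\leq m$ selects exactly the partitions $\lambda\subseteq (m^\ell)$, yielding
\[
\dim S_m \ = \ \sum_{n=0}^{m\ell} p_n(\ell,m) \ = \ \binom{m+\ell}{m},
\]
where the last equality is the $q=1$ specialization of $\binom{m+\ell}{m}_q = \sum_n p_n(\ell,m)\,q^n$.

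For the matching lower bound, the algebraic independence \eqref{eq:Mkalgindep} together with the injectivity of $\Phi$ from Claim~\ref{cla:inj} makes $\{\nu^k e_{\ell-k}\}_{k=0}^{\ell}$ algebraically independent over $\IF$. Hence $\IF[\nu^k e_{\ell-k}]$ is a free polynomial algebra on $\ell+1$ generators, each of degree $\ell$, so its degree-$m\ell$ graded piece has dimension equal to the number of monomials of total degree $m$ in $\ell+1$ variables, namely $\binom{m+\ell}{\ell} = \binom{m+\ell}{m}$. The chain $\IF[\nu^k e_{\ell-k}]_{m\ell} \subseteq \Im(\Phi)_{m\ell} \subseteq S_m$ together with equal outer dimensions forces equality throughout, proving the claim.

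The main obstacle is the identification $\dim S_m = \binom{m+\ell}{m}$: everything hinges on the fact that distinct monomial symmetric polynomials $m_\lambda$ have disjoint monomial supports, so that $\deg_{\mu_i}(\sum_\lambda c_\lambda m_\lambda) = \max\{\lambda_1 : c_\lambda\neq 0\}$, which makes the cut $\deg_{\mu_i}\leq m$ pick out exactly the $m_\lambda$ with $\lambda_1\leq m$. After this and the decomposition by $\nu$-degree are in place, the argument reduces to the standard $q=1$ evaluation of the Gauss binomial together with the three earlier claims.
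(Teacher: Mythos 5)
Your proof is correct, but it takes a genuinely different route from the paper's. The paper proves the nontrivial inclusion $\Im(\Phi) \subseteq \IF[\nu^k e_{\ell-k}]$ constructively: it writes a homogeneous $\Phi(p)$ as a sum of monomials $\nu^\delta e_1^{d_1}\cdots e_\ell^{d_\ell}$ and then, using the two inequalities \eqref{eq:1} and \eqref{eq:2} that come from Claims~\ref{cla:multipleofell} and~\ref{cla:symmetricinmu} together with \eqref{eq:degmui}, exhibits an explicit rewriting of each such monomial as a product of the generators $\nu^k e_{\ell-k}$, via $\nu^\delta = (\nu^\ell e_0)^{d_0}\prod_k (\nu^{\ell-k})^{d_k}$ with nonnegative exponents $d_0,\delta'$. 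You instead sandwich $\Im(\Phi)_{m\ell}$ between $\IF[\nu^k e_{\ell-k}]_{m\ell}$ and an ambient space $S_m$ cut out by the same constraints (homogeneity, symmetry in the $\mu_i$, and $\deg_{\mu_i}\leq m$), then argue by a dimension count: $\dim \IF[\nu^k e_{\ell-k}]_{m\ell}=\binom{m+\ell}{\ell}$ by algebraic independence, and $\dim S_m = \sum_n p_n(\ell,m) = \binom{m+\ell}{m}$ via the monomial-symmetric-polynomial basis and the $q=1$ specialization of the Gaussian binomial. Both arguments rest on the same preliminary claims, and both are field-independent (the $m_\lambda$ are a $\IZ$-basis of monomials, so the count is characteristic-free). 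The paper's approach is more elementary and self-contained in that it avoids invoking the Gaussian binomial identity, and it directly exhibits the rewriting; your approach is shorter given that identity and in effect re-derives Corollary~\ref{cor:basiscount} simultaneously, at the cost of being non-constructive. One small economy: \eqref{eq:Mkalgindep} is itself proved in the paper precisely by showing the $\nu^k e_{\ell-k}$ are algebraically independent, so you do not actually need to route through the injectivity of $\Phi$ from Claim~\ref{cla:inj} to conclude it.
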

\begin{proof}
We have
$\Phi(M_\ell(k)) = \nu^k e_{\ell-k}$, see \eqref{eq:PhiMe}, so it suffices to show that
$\Im(\Phi) \subseteq \IF\left[\nu^k e_{\ell-k}\mid 0\leq k \leq \ell\right]$.
Since the group action of $\SL_\ell(\oIF)\times \SL_\ell(\oIF)$ preserves degrees, every invariant decomposes into a sum of homogeneous invariants.
Let $p \in (\Sym^{\bullet}(\IF^{\ell\times\ell\times 2}))^{\SL_\ell(\oIF)\times \SL_\ell(\oIF)}$
be homogeneous of some degree $d$.
It remains to show that $\Phi(p) \in \IF\left[\nu^k e_{\ell-k}(\mu_1,\dots,\mu_\ell)\mid 0\leq k \leq \ell\right]$.
We collect powers of $\nu$ in $\Phi(p)$ and express the symmetric part (Claim~\ref{cla:symmetricinmu}) as a polynomial in elementary symmetric polynomials:
\begin{equation}
\label{eq:pprimeine}
\Phi(p)(\nu,\mu_1,\dots,\mu_\ell)
 \ = \ 
\sum_{\delta=0}^{d} \nu^\delta \sum_{\mathbf{d}=(d_1,\ldots,d_\ell)} \alpha_{\mathbf{d}} \, e_1^{d_1} \cdots e_\ell^{d_\ell}
\end{equation}
for constants $\alpha_{\mathbf{d}}\in \IF$,
and the sum for every $\delta$ is over
$(d_1,\ldots,d_\ell) \in \IN_0^\ell$ with
\begin{equation}
\label{eq:1}
\textstyle
\sum_{k=1}^\ell d_k \cdot k = d-\delta,
\end{equation}
since $\deg (e_k)=k$.
Combining \eqref{eq:degmui} with the fact that $\deg_{\mu_i}(e_k) = 1$, we also obtain
\begin{equation}
\label{eq:2}
\textstyle
\sum_{k=1}^\ell d_k \leq \frac{d}{\ell}.
\end{equation}
Therefore, for every $\delta$ we have
\[\textstyle
\delta
\stackrel{\eqref{eq:1}}{=}
d - \sum_{k=1}^\ell d_k \cdot k
\stackrel{\eqref{eq:2}}{\geq}
\sum_{k=1}^\ell d_k (\ell-k),
\]
hence $\delta' := \delta-\sum_{k=1}^\ell d_k (\ell-k) \geq 0$.
Therefore we can rewrite the monomial $\nu^\delta e_1^{d_1}e_2^{d_2}\cdots e_\ell^{d_\ell}$
from \eqref{eq:pprimeine}
as
\begin{equation}
\label{eq:newmonomial}
\nu^{\delta'} (\nu^{\ell-1}e_1)^{d_1}(\nu^{\ell-2}e_2)^{d_2}\cdots (\nu e_{\ell-1})^{d_{\ell-1}} (\nu^0 e_\ell)^{d_\ell}.
\end{equation}
Since $d$ is a multiple of $\ell$ (Claim~\ref{cla:multipleofell}),
we can also see from \eqref{eq:1} that $\delta'= d - \ell\cdot \sum_{k=1}^\ell d_k$, so $\delta'$ is a multiple of $\ell$,
hence $\nu^{\delta'} = (\nu^\ell e_0)^{d_0}$ with $d_0 = \frac{d}{\ell}-\sum_{k=1}^\ell d_k$.
Overall, the monomial in \eqref{eq:newmonomial} is contained in $\IF\left[\nu^k e_{\ell-k}(\mu_1,\dots,\mu_\ell)\mid 0\leq k \leq \ell\right]$, which implies that $\Phi(p)$ is also contained there, as desired.
\end{proof}
\begin{proof}[Proof of Theorem~\ref{thm:basis}]
Combining Claim~\ref{cla:inj} and Claim~\ref{cla:imageelemsym}, we have the isomorphism
\[
(\Sym^{\bullet}(\IF^{\ell\times\ell\times2}))^{\SL_\ell(\oIF)\times \SL_\ell(\oIF)} \ \cong \  \IF\left[\nu^k e_{\ell-k}(\mu_1,\dots,\mu_\ell)\mid 0\leq k \leq \ell\right],
\]
and since the generators of the right-hand side ring are the images of the polynomials $M_\ell(k)$, we conclude that
\[
(\Sym^{\bullet}(\IF^{\ell\times\ell\times2}))^{\SL_\ell(\oIF)\times \SL_\ell(\oIF)}
 \ = \ 
\IF\left[M_\ell(k)\mid 0\leq k \leq \ell\right].
\qedhere
\]
\end{proof}

Taking the homogeneous degree $m$ component in Theorem~\ref{thm:basis}, we get the following immediate corollary.
\begin{corollary}
\label{cor:basiscount}
The set 
$
\left\{ \prod_{i=1}^m M_\ell(\lambda_i)\mid \lambda \subseteq (\ell^m) \right\}
$
is a basis of the
$\IF$-vector space $(\Sym^{\ell m}(\IF^{\ell\times\ell\times 2}))^{\SL_\ell(\oIF)\times \SL_\ell(\oIF)}$.
In particular, $\dim\big((\Sym^{\ell m}(\IF^{\ell\times\ell\times 2}))^{\SL_\ell(\oIF)\times \SL_\ell(\oIF)}\big) = \binom{m+\ell}{\ell}$.
\end{corollary}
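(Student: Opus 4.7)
The plan is to deduce the corollary as a direct extraction of the homogeneous degree $\ell m$ component from Theorem~\ref{thm:basis}, using the algebraic independence statement \eqref{eq:Mkalgindep} to guarantee linear independence.

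First I would observe that each generator $M_\ell(k) = \det_\ell \boxtimes F(k)$ lies in $\Sym^\ell(\IF^{\ell\times\ell\times 2})$, so it is homogeneous of degree exactly $\ell$ in the tensor variables. Consequently, a monomial $\prod_{i=1}^m M_\ell(\lambda_i)$ has homogeneous degree $\ell m$, and Theorem~\ref{thm:basis} tells us that every element of $(\Sym^{\ell m}(\IF^{\ell\times\ell\times 2}))^{\SL_\ell(\oIF)\times\SL_\ell(\oIF)}$ is an $\IF$-linear combination of such monomials. Parametrizing a monomial by the multiset of its factors amounts to choosing a weakly decreasing sequence $\ell\geq \lambda_1\geq\cdots\geq\lambda_m\geq 0$, i.e., a partition $\lambda\subseteq(\ell^m)$; so the proposed set spans.

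For linear independence, I would invoke \eqref{eq:Mkalgindep}, which says $\{M_\ell(0),\ldots,M_\ell(\ell)\}$ are algebraically independent over $\IF$. Therefore the surjection $\IF[t_0,\ldots,t_\ell]\to\IF[M_\ell(0),\ldots,M_\ell(\ell)]$, $t_k\mapsto M_\ell(k)$, is an isomorphism of $\IF$-algebras, and via this isomorphism the spanning set above corresponds to the set of all distinct monomials of total degree $m$ in $t_0,\ldots,t_\ell$. These are linearly independent in the polynomial ring, so the spanning set is a basis.

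Finally, to obtain the dimension, I would simply count: the number of monomials of degree $m$ in $\ell+1$ variables equals the number of multisets of size $m$ from a set of $\ell+1$ elements, namely $\binom{m+\ell}{\ell}$. There is no real obstacle here; the corollary is essentially a bookkeeping consequence of Theorem~\ref{thm:basis} combined with \eqref{eq:Mkalgindep}, and the only subtlety worth flagging is that the grading in which the polynomial ring $\IF[M_\ell(0),\ldots,M_\ell(\ell)]$ inherits from the tensor space assigns degree $\ell$ (not $1$) to each $M_\ell(k)$, so degree $\ell m$ on the tensor side corresponds to total degree $m$ in the generators.
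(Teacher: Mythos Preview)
Your argument is correct and matches the paper's approach exactly: the paper simply states that the corollary follows by ``taking the homogeneous degree $m$ component in Theorem~\ref{thm:basis}'', and your write-up is a faithful unpacking of that remark, using \eqref{eq:Mkalgindep} for linear independence and the standard monomial count for the dimension.
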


\section{The Kronecker-Plethysm isomorphism}
We now change the grading of the algebra
$(\Sym^{\bullet}(\IF^{\ell\times\ell\times 2}))^{\SL_\ell(\oIF)\times \SL_\ell(\oIF)}$.
We define the re-graded algebra
$(\Sym^{\ell\bullet}(\IF^{\ell\times\ell\times 2}))^{\SL_\ell(\oIF)\times \SL_\ell(\oIF)}[1/\ell]$ via defining its homogeneous degree $m$ components as
\[
\left((\Sym^{\ell\bullet}(\IF^{\ell\times\ell\times 2}))^{\SL_\ell(\oIF)\times \SL_\ell(\oIF)}[1/\ell]\right)_m = (\Sym^{\ell m}(\IF^{\ell\times\ell\times 2}))^{\SL_\ell(\oIF)\times \SL_\ell(\oIF)}.
\]
Note that $M_\ell(k)$ is homogeneous of degree 1 in this algebra.

\begin{definition}
Let $\ell\in \IN$.
We define the homomorphism of graded $\IF$-algebras
\[
\KP_{\ell} :(\Sym^{\ell\bullet}(\IF^{\ell\times\ell\times 2}))^{\SL_\ell(\oIF)\times \SL_\ell(\oIF)}[1/\ell] \ \longrightarrow \ \Sym^\bullet \Sym_\ell(\IF^2)
\]
via defining it on generators (see Theorem~\ref{thm:basis}) as
$\KP_{\ell}(M_\ell(k)) := F(k)$.
\end{definition}
\eqref{eq:Mkalgindep} implies that this is well-defined.

\begin{proposition}\label{pro:Kliso}
$\KP_{\ell}$ is an isomorphism of graded $\IF$-algebras. \end{proposition}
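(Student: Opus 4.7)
The plan is to show that both the source and the target of $\KP_\ell$ are polynomial rings on $\ell+1$ algebraically independent degree-$1$ generators, and that $\KP_\ell$ simply matches the generators on both sides.

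First I would unpack the source. By Theorem~\ref{thm:basis}, the algebra $(\Sym^{\bullet}(\IF^{\ell\times\ell\times2}))^{\SL_\ell(\oIF)\times \SL_\ell(\oIF)}$ is generated by the $\ell+1$ elements $M_\ell(0),\ldots,M_\ell(\ell)$, and by \eqref{eq:Mkalgindep} these generators are algebraically independent. Hence the source is abstractly a polynomial ring $\IF[M_\ell(0),\ldots,M_\ell(\ell)]$. After the regrading, each $M_\ell(k)$ sits in homogeneous degree $1$, since $M_\ell(k)\in\Sym^\ell(\IF^{\ell\times\ell\times 2})$ is precisely the degree-$1$ part of the regraded algebra.

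Next I would unpack the target. The vectors $F(0),\ldots,F(\ell)$ form an $\IF$-basis of $\Sym_\ell \IF^2$, so $\Sym^\bullet\Sym_\ell\IF^2$ is the polynomial ring $\IF[F(0),\ldots,F(\ell)]$ with each $F(k)$ of degree $1$. Now $\KP_\ell$ is an $\IF$-algebra homomorphism between two polynomial rings on $\ell+1$ generators, sending the free generators $M_\ell(k)$ bijectively to the free generators $F(k)$; such a map is automatically an algebra isomorphism, and it preserves grading since corresponding generators both lie in degree $1$.

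As a sanity check on dimensions, the homogeneous degree-$m$ component on the source side has dimension $\binom{m+\ell}{\ell}$ by Corollary~\ref{cor:basiscount}, while on the target side $\Sym^m$ of an $(\ell+1)$-dimensional space also has dimension $\binom{m+\ell}{\ell}$, matching as required. There is no real obstacle here: all the substantive work was done in proving Theorem~\ref{thm:basis} (generation) and in establishing the algebraic independence \eqref{eq:Mkalgindep}, after which the proposition is a formal consequence.
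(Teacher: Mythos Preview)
Your proof is correct and follows essentially the same approach as the paper: both sides are recognized as polynomial rings on $\ell+1$ algebraically independent degree-$1$ generators (via Theorem~\ref{thm:basis} and \eqref{eq:Mkalgindep} for the source, and the fact that the $F(k)$ form a basis of $\Sym_\ell\IF^2$ for the target), and $\KP_\ell$ matches generators to generators. The paper phrases this by simply writing down the inverse on generators, but the content is identical.
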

\begin{proof}
Since $\{F(k)\mid 0\leq k \leq \ell\}$ is a basis of the $\IF$-vector space $\Sym_\ell(\IF^2)$, the $F(k)$ generate the algebra $\Sym^\bullet \Sym_\ell(\IF^2)$ and are algebraically independent therein \cite[Pro.~8.1]{Lan12}.
The inverse $\KP_{\ell}^{-1}$ of $\KP_{\ell}$ is defined on these generators as $\KP_{\ell}^{-1}(F(k)) := M_\ell(k)$.
\end{proof}

\begin{theorem}\label{thm:kpiso}
Let $\ell,m\in \IN$.
Let the isomorphism of $\IF$-vector spaces
\[
\KP_{m,\ell} : (\Sym^{\ell m}(\IF^{\ell\times\ell\times 2}))^{\SL_\ell(\oIF)\times \SL_\ell(\oIF)} \ \stackrel{\sim}{\longrightarrow} \ \Sym^m\Sym_\ell(\IF^2)
\]
be defined as the restriction of the isomorphism $\KP_{\ell}$ to 
the degree $m$ homogeneous components.
We have that $\KP_{m,\ell}$ is an isomorphism of $\GL_2(\IF)$-representations.
\end{theorem}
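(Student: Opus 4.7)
The plan is to leverage the fact that, by Proposition~\ref{pro:Kliso}, $\KP_\ell$ is already an isomorphism of graded $\IF$-algebras, so its degree-$m$ component $\KP_{m,\ell}$ is automatically an $\IF$-linear bijection; only $\GL_2(\IF)$-equivariance remains to check. Since $\KP_{m,\ell}$ is the restriction of the algebra homomorphism $\KP_\ell$ to the degree-$m$ component, it suffices to prove that $\KP_\ell$ itself is $\GL_2(\IF)$-equivariant.

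First I would record that both sides of $\KP_\ell$ carry a natural $\GL_2(\IF)$-action by graded algebra automorphisms: on the source, $\GL_2(\IF)$ acts on the third tensor factor of $\IF^{\ell\times\ell\times 2}$, which commutes with the $\SL_\ell(\oIF)\times\SL_\ell(\oIF)$-action (so the invariant subalgebra is preserved) and extends to $\Sym^\bullet$ as algebra automorphisms; on the target, $\GL_2(\IF)$ acts on $\IF^2$, induces an action on $\Sym_\ell(\IF^2)$, and extends to $\Sym^\bullet\Sym_\ell(\IF^2)$ as algebra automorphisms. Since both actions respect the algebra structure, equivariance of the algebra homomorphism $\KP_\ell$ only needs to be verified on the generating set $\{M_\ell(k)\mid 0\leq k\leq\ell\}$ supplied by Theorem~\ref{thm:basis}.

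The core computation uses Proposition~\ref{pro:boxtimesequivariant} with $V=\IF^{\ell\times\ell}$ carrying the trivial $G$-action and $W=\IF^2$ carrying the natural $H=\GL_2(\IF)$-action. Since $\det_\ell\in\Sym^\ell(\IF^{\ell\times\ell})$ is $\GL_2(\IF)$-fixed (the action is trivial on that factor), the proposition yields, for every $g\in\GL_2(\IF)$,
\[
g\cdot M_\ell(k) \ = \ g\cdot\bigl(\det_\ell\boxtimes F(k)\bigr)\ =\ \det_\ell\boxtimes(g\cdot F(k)).
\]
Expanding $g\cdot F(k)=\sum_{j=0}^\ell c_{j,k}(g)F(j)$ in the basis $\{F(0),\dots,F(\ell)\}$ of $\Sym_\ell(\IF^2)$ and using bilinearity of $\boxtimes$ gives $g\cdot M_\ell(k)=\sum_j c_{j,k}(g)M_\ell(j)$. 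Applying the algebra map $\KP_\ell$ with $\KP_\ell(M_\ell(j))=F(j)$ then yields $\KP_\ell(g\cdot M_\ell(k))=g\cdot F(k)=g\cdot\KP_\ell(M_\ell(k))$, completing the equivariance check on generators.

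The only real subtlety is in setting up Proposition~\ref{pro:boxtimesequivariant}: one must confirm that the $\GL_2(\IF)$-action on $\Sym^\ell(\IF^{\ell\times\ell\times 2})$ used in that proposition agrees with the restriction of the action inherited from the third factor of $\IF^{\ell\times\ell\times 2}$. This is immediate from the definition of $\boxtimes$ on decomposable tensors $(x_i\otimes z_s)$, on which $\GL_2(\IF)$ acts only through the $z$-factor. Beyond this matching of actions, everything is formal, so I do not anticipate any serious obstacle.
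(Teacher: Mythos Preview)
Your proof is correct and follows essentially the same approach as the paper: both use Proposition~\ref{pro:boxtimesequivariant} to obtain $g\cdot M_\ell(k)=\det_\ell\boxtimes(g\cdot F(k))$, expand $g\cdot F(k)$ in the basis $\{F(j)\}$, and then apply $\KP_\ell$. Your version is slightly more streamlined in that you explicitly reduce to checking equivariance on the algebra generators $M_\ell(k)$ (using that both actions are by graded algebra automorphisms), whereas the paper carries out the computation on the full basis $M(\lambda)=\prod_i M_\ell(\lambda_i)$ of the degree-$m$ component; but the underlying argument is the same.
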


\begin{proof}
It suffices to show that $\KP_{m,\ell}$ respects the group action of $\GL_2(\IF)$.
For $\la \subseteq (\ell^m)$ we write
$F(\lambda) := \prod_{i=1}^m F(\lambda_i)$ and
$M(\lambda) := \prod_{i=1}^m M_\ell (\lambda_i)$.
Since $\{F(k)\mid 0\leq k\leq \ell\}$ is a basis of the $\IF$-vector space $\Sym_\ell(\IF^2)$, we have that $\{F(\la)\mid \la\subseteq (\ell^m)\}$ is a basis of $\Sym^m\Sym_\ell(\IF^2)$.
By Corollary~\ref{cor:basiscount},
$\{M(\la)\mid \la\subseteq (\ell^m)\}$ is a basis of $(\Sym^{\ell m}(\IF^{\ell\times\ell\times 2}))^{\SL_\ell(\oIF)\times \SL_\ell(\oIF)}$.
Let $g\in \GL_2(\IF)$. By the equivariance of $\boxtimes$ we have
\begin{equation}\label{eq:JMlk}
g M_\ell(k) = g(\det_\ell \boxtimes F(k)) = \det_\ell \boxtimes gF(k).
\end{equation}
Let $c_{k,j} \in \IF$ with $g\cdot F(k) = \sum_{j} c_{k,j} F(j)$.
\begin{align*}
(\KP_{\ell}\circ g) (M_\ell(\la)) &= (\KP_{\ell}\circ g) (\prod_i M_\ell(\la_i)) = \prod_i (\KP_{\ell}\circ g)(M_\ell(\la_i))
\stackrel{\eqref{eq:JMlk}}{=} \prod_i \KP_{\ell}(\det_\ell \boxtimes (g\cdot F(\la_i))
\\
&=
\prod_i \KP_{\ell}\left(\det_\ell \boxtimes\sum_{j} c_{\la_i,j} F(j)\right) =
\prod_i \sum_j c_{\la_i,j} \, \KP_{\ell}(M_\ell(j)) = \prod_i \sum_{j}c_{\la_i,j} F(j)
\\
&=
\prod_i g\cdot F(\la_i)
=
g \cdot F(\lambda) = (g \circ \KP_{\ell})(M_\ell(\lambda)).
\qedhere
\end{align*}
\end{proof}

\begin{corollary}
We also have the following isomorphisms of $\GL_2(\IF)$-representations:
\begin{itemize}
    \item $
    (\Sym_{\ell m}(\IF^{\ell\times\ell\times 2}))_{\SL_\ell(\oIF)\times \SL_\ell(\oIF)}\cong \Sym_m\Sym^\ell(\IF^2) 
    $
    \item$
    (\Sym_{\ell m}(\IF^{\ell\times \ell\times 2}))_{\SL_\ell(\oIF)\times \SL_\ell(\oIF)}\cong (\Sym^{\ell m}(\IF^{m\times m\times 2}))^{\SL_m(\oIF)\times \SL_m(\oIF)}
    $
\end{itemize}
\end{corollary}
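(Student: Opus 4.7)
The approach is to derive both isomorphisms from Theorem~\ref{thm:kpiso} by combining it with Lemma~\ref{lem:duality} and Hermite reciprocity, following the web of maps in Figure~\ref{fig:square}.

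For the first isomorphism, the plan is to replay the development of Section~3 in the divided-power setting. First, construct a divided-power analogue $\widetilde M_\ell(k) \in \Sym_\ell(\IF^{\ell\times\ell\times 2})$ of $M_\ell(k)$, using a divided-power version $\widetilde{\det}_\ell \in \Sym_\ell(\IF^{\ell\times\ell})^{\SL_\ell(\oIF)\times \SL_\ell(\oIF)}$ of the determinant and a dual $\widetilde\boxtimes$-type product $\Sym_d V \times \Sym^d W \to \Sym_d(V\otimes W)$ built along the lines of Section~2. After verifying that each $\widetilde M_\ell(k)$ lies in the invariant space, adapt the arguments of Claims~\ref{cla:inj}--\ref{cla:imageelemsym}: the same evaluation map on $(\nu\mathbb{I}_\ell, \Diag(\mu_1,\ldots,\mu_\ell))$ proves injectivity via Zariski density, the symmetry and degree arguments transfer verbatim, and the elementary-symmetric-polynomial analysis identifies the image. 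This shows that $\{\widetilde M(\lambda) : \lambda\subseteq (\ell^m)\}$ is a basis of $(\Sym_{\ell m}(\IF^{\ell\times\ell\times 2}))^{\SL_\ell(\oIF)\times \SL_\ell(\oIF)}$. Finally, mirroring the proof of Theorem~\ref{thm:kpiso}, define the $\GL_2(\IF)$-equivariant isomorphism $\widetilde{\KP}_{m,\ell}$ that sends $\widetilde M(\lambda) \mapsto \widetilde F(\lambda)$, where $\widetilde F(\lambda)$ is the natural basis element of $\Sym_m\Sym^\ell\IF^2$ indexed by $\lambda$ (built from the monomials $x^k y^{\ell-k} \in \Sym^\ell\IF^2$).

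The second isomorphism is then an immediate composition of three $\GL_2(\IF)$-equivariant isomorphisms:
\[
(\Sym_{\ell m}(\IF^{\ell\times\ell\times 2}))^{\SL_\ell(\oIF)\times \SL_\ell(\oIF)} \xrightarrow{\sim} \Sym_m\Sym^\ell\IF^2 \xrightarrow{\Hermite_{m,\ell}} \Sym^\ell\Sym_m\IF^2 \xrightarrow{\KP_{\ell,m}^{-1}} (\Sym^{\ell m}(\IF^{m\times m\times 2}))^{\SL_m(\oIF)\times \SL_m(\oIF)},
\]
where the first map is the first isomorphism of the Corollary, the second is Hermite reciprocity~\eqref{eq:Hermite}, and the third is the inverse of Theorem~\ref{thm:kpiso} applied with the roles of $m$ and $\ell$ swapped. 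The main obstacle lies in the first isomorphism: in characteristic zero the clean duality $(V^G)^\star \cong (V^\star)^G$ would reduce it to a direct dualization of Theorem~\ref{thm:kpiso} via Lemma~\ref{lem:duality}, but in positive characteristic $\SL_\ell$ is not linearly reductive and this duality can fail. The divided-power construction outlined above sidesteps the issue by producing the generators explicitly; the most delicate technical step is carrying the $\boxtimes$-equivariance of Proposition~\ref{pro:boxtimesequivariant} and the injectivity argument of Claim~\ref{cla:inj} faithfully over to the divided-power setting.
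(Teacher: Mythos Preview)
For the second isomorphism your composition agrees with the paper's: this is exactly the map $\I_{m,\ell} = \KP_{\ell,m}^{-1}\circ\Hermite_{m,\ell}\circ(\KP_{m,\ell}^\star)^{-1}$ from Figure~\ref{fig:square}.

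For the first isomorphism the paper does precisely what you set aside as potentially problematic: it simply takes the dual of the isomorphism $\KP_{m,\ell}$ from Theorem~\ref{thm:kpiso} and invokes Lemma~\ref{lem:duality} to identify the two sides with the spaces in the corollary. There is no replay of Section~3 in the divided-power setting; the paper's argument is a one-liner. Your worry that $(V^G)^\star$ and $(V^\star)^G$ need not coincide in positive characteristic is a fair one in general, and the paper does not spell out why the identification is legitimate here, but that is the route it takes.

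Your proposed alternative, however, has a genuine gap of its own. The injectivity argument of Claim~\ref{cla:inj} works by \emph{evaluating} an invariant polynomial at points of $\oIF^{\ell\times\ell\times 2}$ and invoking Zariski density. Elements of $\Sym_{\ell m}$ are symmetric tensors, not polynomial functions, and there is no evaluation map with the same property: the natural map $\Sym_d V\to\Sym^d V$ has a kernel in positive characteristic, so ``vanishes at every point'' does not force vanishing in $\Sym_d$. Likewise, the image computation of Claim~\ref{cla:imageelemsym} and the algebraic-independence step~\eqref{eq:Mkalgindep} rely on the polynomial-ring structure of $\Sym^\bullet$, which the divided-power algebra $\Sym_\bullet$ does not have in positive characteristic. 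So the claim that these arguments ``transfer verbatim'' is not correct; making this route work would require substantially new ingredients, and at that point the direct dualization is the shorter path.
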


\begin{proof}
The first isomorphism is obtained by taking the dual of the map $\KP_{m,\ell}$. The spaces in this isomorphism are the duals of the spaces in Theorem~\ref{thm:kpiso}, as it is shown in Lemma~\ref{lem:duality} and Section~\ref{sec:coinvariants} (equation~\eqref{eq:coinvariants}).
The second isomorphism is obtained as the composition of isomorphisms $ (\KP_{\ell,m})^{-1}\circ\Hermite_{m,\ell}\circ(\KP_{m,\ell}^\star)^{-1}$, as it is shown in Figure~\ref{fig:square}.
\end{proof}

\section{Coinvariants}\label{sec:coinvariants}

In this section, we prove that
\begin{equation}\label{eq:coinvariantsdual}
\left(\big(\Sym^{\ell m}\:(\IF^{\ell \times \ell \times 2})\big)^{\SL_\ell(\oIF)\times \SL_\ell(\oIF)}\right)^\star_\IF
\ \cong\ \left(\Sym_{\ell m}\:(\IF^{\ell \times \ell \times 2})\right)_{\SL_\ell(\oIF)\times \SL_\ell(\oIF)}
\end{equation}
as $\SL_2(\IF)$-representations.
The rest of this section is devoted to this proof.

\subsection{Scalar extensions and over-actions}\label{subsec:overact}
In the whole Section~\ref{sec:coinvariants}, let $\IF\subseteq\oIF$ be any field extension, let $V$ be a finite dimensional $\IF$-vector space, and let $W=V\otimes_\IF\oIF$ be its scalar extension.
We will give a concrete $V$ in \S\ref{subsec:specificcase}.
There is an embedding $V\to W$ via $v\mapsto v\otimes 1$. We use this specific embedding to write
\begin{equation}\label{eq:scalextembed}
V\subseteq W.
\end{equation}
Let $V$ carry an $\IF$-linear action of the group~$H$.
Then $H$ also acts $\oIF$-linearly on $W$ via the composition of group homomorphisms
$H \to \GL(V) \to \GL(W)$,
where the last group homomorphism is due to $\GL(V)\subseteq\GL(W)$, which is induced from \eqref{eq:scalextembed}.
Let $Z$ act $\oIF$-linearly on $W$, and let both actions commute on~$W$.
Then we say that ``$Z\times H$ over-acts on $V$'', i.e., $H$ acts $\IF$-linearly on $V$ and $Z$ acts $\oIF$-linearly on $V\otimes_\IF\oIF$.
This is the situation we assume in Section~\ref{sec:coinvariants}.

\subsection{Dual spaces}
For the sake of clarity, for an $\IF$-vector space $V$, we write $V^\star_\IF$ for its dual vector space,
and for an $\oIF$-vector space $W$, we write $W^\star_\oIF$ for its dual vector space.
 
Every $\IF$-basis $B$ of $V$ can be interpreted as an $\oIF$-basis of $W$ via \eqref{eq:scalextembed}.

\begin{claim}\label{cla:dualscalarextension}
There is an isomorphism of $\oIF$-vector spaces
$W^\star_{\oIF} \ \cong \ V^\star_\IF \otimes_\IF \oIF$.
In particular, if $\{b_i\}$ is an $\IF$-basis of $V$, then an isomorphism is given by $(b_i\otimes 1)^\star\mapsto (b_i^\star)\otimes 1$.
\end{claim}
\begin{proof}
Let $\{b_i\}$ be an $\IF$-basis of $V$, hence $\{b_i\otimes 1\}$ is an $\oIF$-basis of $W$.
Let $\{b_i^\star\}$ denote the dual basis of $V$, and let $\{(b_i\otimes 1)^\star\}$ denote the dual basis of $W$.
Then $W^\star_\oIF = \{\sum_{i}\alpha_i(b_i\otimes 1)^\star\mid \alpha_i\in\oIF\}$,
and $V^\star_\IF \otimes_\IF \oIF = \{\sum_{i}\beta_i (b_i^\star \otimes \gamma_i) \mid \beta_i\in\IF, \gamma_i\in\oIF\}$.
The map $\sum_{i}\alpha_i(b_i\otimes 1)^\star\mapsto\sum_{i}b_i^\star \otimes \alpha_i$
is an $\oIF$-isomorphism, with inverse given by
$\sum_{i}\beta_i (b_i^\star \otimes \gamma_i)\mapsto\sum_{i}(\beta_i\gamma_i)(b_i\otimes 1)^\star$.
\end{proof}
Using Claim~\ref{cla:dualscalarextension} and \eqref{eq:scalextembed}, we can write
\begin{equation}\label{eq:VstarWstar}
V^\star_\IF\subseteq W^\star_\oIF.
\end{equation}
For a dual vector $f\in V^\star_\IF$ of $v\in V$, we write $f=v^\star$ without specifying the field,
because $v^\star_\IF$ and $(v\otimes 1)^\star_\oIF$ are mapped to each other under the isomorphism in Claim~\ref{cla:dualscalarextension}.

\begin{claim}\label{cla:semiactdual}
$Z \times H$ over-acts on $V^\star_\IF$.
\end{claim}
\begin{proof}
It suffices to check the group action of $H$ on $V^\star_{\IF}$ (this is standard) and the action of $Z\times H$ on $V^\star_\IF\otimes_\IF\oIF$,
which is standard if one applies the isomorphism with $W^\star_{\oIF}$ from Claim~\ref{cla:dualscalarextension}.
\end{proof}

\subsection{Invariants and coinvariants}

We say that $V$ is an $\IF H$-representation if $V$ is a finite dimensional $\IF$-vector space with an $\IF$-linear group action of~$H$.
Let
\[
I := \langle zw-w\mid z\in Z, w\in W\rangle_\oIF \quad \subseteq W
\]

Vectors in a quotient $W/I$ are denoted by $[w]_I$.
The next lemma is obvious, but we list it for reference.
\begin{lemma}
\label{lem:quotienteq}
For $w,w'\in W$ we have $[w]_I=[w']_I$ if and only if $w-w'\in I$.
\end{lemma}
\begin{proof}
$[w]_I=[w']_I$ \ iff \ $w+I = w'+I$ \ iff \ $(w-w')+I = I$ \ iff \ $w-w' \in I$.
\end{proof}

\begin{claim}\label{cla:IFH}
$I$ is an $\oIF H$-representation, and hence also an $\IF H$-representation.
\end{claim}
\begin{proof}
Let $v = \sum_i \alpha_i (z_i w_i - w_i)$, $\alpha_i\in\oIF$, $z_i\in Z$, $w_i\in W$.
Then
\[\textstyle
\forall h\in H : \ hv = \sum_i \alpha_i (h z_i w_i - h w_i) = \sum_i \alpha_i \big(z_i (h w_i) - (h w_i)\big) \in I. \qedhere
\]
\end{proof}
Define the annihilator
\[
\ann(I)=\{\varphi\in W^\star_\oIF\mid \forall v \in I: \varphi(v)=0\}.
\]

\begin{definition}\label{def:VinvZ}
We define $V^Z := W^Z \cap V$.
\end{definition}
\begin{claim}\label{cla:invH}
$V^Z$ is an $\IF H$-representation.
\end{claim}
\begin{proof}
Let $v \in V^Z$. Then $\forall z \in Z: zhv = hzv = hv$, hence $hv \in V^Z$.
\end{proof}

We combine Claim~\ref{cla:semiactdual} and Claim~\ref{cla:invH} (invoked with $V^\star_\IF$ instead of $V$) to obtain the $\IF H$-representation $(V^\star_\IF)^Z$.

Define the generalization of a coinvariant space:
\[
\coinv := \big\{[v]_I \in W_Z \ \big| \ v \in V\big\}.
\]
\begin{remark}
We remark that $\coinv = \big\{x \in W_Z \ \big| \ x\cap V \neq\emptyset \big\} =: S$, which can be seen as follows.
 
Let $x\in\coinv$, $x=[v]_I$ for some $v \in V$. Hence, $[v]_I\cap V \neq \emptyset$ and thus $[v]_I\in S$.
For the other direction, let 
$x\in S$, $x=[v]_I$ for some $v\in W$.
Then $\exists u \in [v]_I\cap V$, i.e., $u=v+w$ for some $w\in I$. Hence, $u-v\in I$, and by Lemma~\ref{lem:quotienteq} we have $[u]_I=[v]_I$.
Therefore, $[v]_I\in \coinv$.
\end{remark}

\begin{claim}\label{cla:VZFH}
$\coinv$ is an $\IF H$-representation.
\end{claim}
\begin{proof}
Let $[v]_I \in \coinv$, $v\in V$, and let $[w]_I \in \coinv$, $w\in V$, and let $\alpha \in \IF$.
We have $\alpha v + w\in V$ and hence $\alpha[v]_I+[w]_I = [\alpha v+w]_I \in \coinv$.
It remains to verify that $\coinv$ is closed under the action of $H$.
Let $[v]_I \in \coinv$, $v\in V$, $h\in H$.
Then $hv\in V$, and thus $h[v]_I=[hv]_I \in \coinv$.
\end{proof}
Claim~\ref{cla:VZFH} implies that
\begin{equation}\label{eq:VZFH}
\textup{$(\coinv)^\star_\IF$ is also an $\IF H$-representation.}
\end{equation}
\begin{claim}\label{cla:annIFH}
$\ann(I)$ is an $\IF H$-representation.
\end{claim}
\begin{proof}
$\ann(I)$ is an $\oIF$-vector space, hence also an $\IF$-vector space.
Let $f\in\ann(I)$, and let $w\in I$. Then
\[
(hf)(w)=f(\underbrace{h^{-1}w}_{\in I, \  \textup{Cla.}\,\ref{cla:IFH}}) = 0.
\]
Therefore, $hf\in\ann(I)$.
\end{proof}
Claim~\ref{cla:annIFH} implies that $\ann(I)\cap V^\star_\IF$ is also an $\IF H$-representation.
We compare it to \eqref{eq:VZFH} in the following lemma.
\begin{lemma}\label{lem:VZannihilator}
$(\coinv)^\star_\IF \ \cong \ \ann(I) \cap V^\star_\IF$ as $\IF H$-representations.
\end{lemma}
\begin{proof}
Let $p:W\to W/I$ be the canonical map. If $v\in V$, then $p(v) \in \coinv$ by definition. In other words, restricting the domain and codomain of $p$, we obtain $q:V\to \coinv$.
Given $f\in (\coinv)^\star_\IF$, we have $f\circ q \in \ann(I)\cap V^\star_\IF$,
because for $w\in I$ we have $(f\circ q)(w) = f([0]_I)=0$, and for $v\in V$ we have $(f\circ q)(v)=f([v]_I)\in\IF$.
Hence, the composition with $q$ gives an $\IF$-linear map
\[
\Phi:(\coinv)^\star_\IF\to\ann(I)\cap V^\star_\IF.
\]
For every $F\in \ann(I)\cap V^\star_\IF$ we define $f\in(\coinv)^\star_\IF$ via $f([v]_I) := F(v)$, for $v\in V$.
This is well-defined, because firstly, if $[v]_I=[w]_I$, $v,w\in V$, then $v-w\in I$ by Lemma~\ref{lem:quotienteq} and hence
$F(v-w)=0$, which implies $F(v) = F(w)$,
and secondly, $f$ maps to $\IF$, because $F$ maps to $\IF$.
This gives an $\IF$-linear map
\[
\Psi:\ann(I)\cap V^\star_\IF\to(\coinv)^\star_\IF.
\]
\begin{enumerate}
\item $\Psi\circ\Phi = \textup{id}_{\ann(I)\cap V^\star_\IF}$, because $(\Psi\circ\Phi)(f) = \Psi(f\circ q) = f$.
\item $\Phi\circ\Psi = \textup{id}_{(\coinv)^\star_\IF}$, because $(\Phi\circ\Psi)(F) = \Phi(f) = F$.
\end{enumerate}
Hence, $(\coinv)^\star_\IF$ and $\ann(I)\cap V^\star_\IF$ are isomorphic $\IF$-vector spaces.
 
It remains to prove that $\Phi:(\coinv)^\star_\IF\to\ann(I)\cap V^\star_\IF$, $\Phi(f)=f\circ q$,
is $H$-equivariant, i.e., $h(\Phi(f)) = \Phi(hf)$.
Indeed,
\begin{align*}
(h(\Phi(f)))(v)&\overset{\hphantom{q \textup{ equiv}}}{=} (h(f\circ q))(v)
=(f\circ q)(h^{-1}v)
=f(q(h^{-1}v))
\\
&\overset{q \textup{ equiv}}{=}
f(h^{-1} \, q(v))
=(hf)(q(v))
= ((hf)\circ q)(v) = (\Phi(hf))(v).\qedhere
\end{align*}
\end{proof}

\begin{proposition}\label{pro:dual}
We have
$(\coinv)^\star_\IF \ \cong \ (V^\star_\IF)^Z$ as $\IF H$-representations.
\end{proposition}
\begin{proof}
We use Lemma~\ref{lem:VZannihilator} to see that $(\coinv)^\star_\IF \ \cong \ \ann(I) \cap V^\star_\IF$.
It remains to show that $\ann(I)\cap V^\star_\IF = (V^\star_\IF)^Z$.
\begin{eqnarray*}
&&
\varphi \in \ann(I)\cap V^\star_\IF
\\
&\Longleftrightarrow&
\varphi\in V^\star_\IF \textup{ and } \forall u\in I: \ \varphi(u)=0
\\
&\Longleftrightarrow&
\varphi\in V^\star_\IF \textup{ and } \forall z\in Z, w\in W: \ \varphi(zw-w)=0
\\
&\stackrel{\varphi\textup{ linear}}{\Longleftrightarrow}&
\varphi\in V^\star_\IF \textup{ and } \forall z\in Z, w\in W: \ \varphi(zw)=\varphi(w)
\\
&\Longleftrightarrow&
\varphi\in V^\star_\IF
\textup{ and } \forall z\in Z: z^{-1}\varphi=\varphi
\\
&\Longleftrightarrow&
\varphi \in (W^\star_\oIF)^Z \cap V^\star_\IF
 \ \stackrel{\textup{Def.~}\ref{def:VinvZ}}{=} \ 
(V^\star_\IF)^Z.
\end{eqnarray*}

\vspace{-0.8cm}
\end{proof}

Note that using $V^\star_\IF$ instead of $V$ in Proposition~\ref{pro:dual} and taking the dual gives
\begin{equation}\label{eq:Vstardual}
(V^\star_\IF)_Z \ \cong \ (V^Z)^\star_\IF
\end{equation}

\begin{remark}
From the definitions, 
$\Sym_m (W^\star_\IF) =(\tensor^m (W^\star_\IF))^{\aS_m}$
and $(\Sym^m W)^\star_\IF = ((\tensor^m W)_{\aS_m})^\star_\IF$, 
and Proposition~\ref{pro:dual} implies that these are isomorphic (compare Proposition~\ref{lem:duality}).
This was proved directly (i.e., using less general principles) in \cite[Prop.~3.7]{McD21}.
\end{remark}

\subsection{The specific case of interest}
\label{subsec:specificcase}

Let $Z = \SL_\ell(\oIF)\times\SL_\ell(\oIF)$.
 
We have the scalar extension $\Sym^{\ell m} \:\oIF^{\ell \times\ell \times 2} = (\Sym^{\ell m} \:\IF^{\ell \times\ell \times 2}) \otimes_\IF\oIF$.
There is an over-action of $Z\times\SL_2(\IF)$ on
$\Sym^{\ell m} \:\IF^{\ell \times\ell \times 2}$, see \S\ref{subsec:overact}.
By \eqref{eq:Vstardual} we have
\begin{equation}
\label{eq:afterdualizing}
((\Sym^{\ell m} \:\IF^{\ell \times\ell \times 2})^Z)^\star_\IF \ \cong \ 
((\Sym^{\ell m} \:\IF^{\ell \times\ell \times 2})^\star_\IF)_Z.
\end{equation}
 
We write $\IF^{\ell\star} := (\IF^{\ell})_\IF^\star$
and
$\oIF^{\ell\star} := (\oIF^{\ell})_\oIF^\star$
and
$\IF^{\ell\star \times\ell\star \times 2\star} :=
\IF^{\ell\star}\otimes \IF^{\ell\star}\otimes\IF^{2\star}$
and
$\oIF^{\ell\star \times\ell\star \times 2\star} := 
\oIF^{\ell\star}\otimes \oIF^{\ell\star}\otimes\oIF^{2\star}$.
\begin{lemma}\label{lem:dualisomlem}
\[\big(\Sym^{\ell m} \:\IF^{\ell \times\ell \times 2}\big)^{\SL_\ell(\oIF)\times \SL_\ell(\oIF)} \ \cong  \ \big(\Sym^{\ell m} \:\IF^{\ell\star \times\ell\star \times 2\star}\big)^{\SL_\ell(\oIF)\times \SL_\ell(\oIF)}
\]
as $\SL_2(\IF)$-representations.
\end{lemma}
\begin{proof}
Fix an $\IF$-basis of $\IF^{\ell}$.
This is also an $\oIF$-basis of $\oIF^{\ell}$.
This induces a group automorphism of $\SL_\ell(\oIF)$ given by $g \mapsto g^{-T}$.
Let $V$ be any $\SL_\ell(\oIF)$-representation, and let $\widetilde V$ be the $\SL_\ell(\oIF)$-representation that arises from $V$ by composing the action of $\SL_\ell(\oIF)$ on $V$ with the group automorphism $g\mapsto g^{-T}$. It is crucial to note that $V$ and $\widetilde V$ have the same invariant space:
\begin{equation}\label{eq:VVtilde}
V^{\SL_\ell(\oIF)} = \widetilde V^{\SL_\ell(\oIF)},
\end{equation}
because $\SL_\ell(\oIF) = \{g^{-T}\mid g \in \SL_\ell(\oIF)\}$ are the same sets.
For dual spaces, we use the notation $V^{\ell\circ} := \widetilde{V^{\ell\star}}$ as in \cite[\S2.8]{MW:22}.
Using the fixed basis of $\IF^\ell$, let $\star:\IF^{\ell}\to\IF^{\ell\circ}$ be the map that sends a vector to its dual vector.
The map $\star$ has a scalar extension $\oIF^{\ell}\to\oIF^{\ell\circ}$ that we also call $\star$.
This scalar extension is an $\SL_\ell(\oIF)$-isomorphism.
Let $\varphi:\IF^2\to\IF^{2\star}$ be an $\SL_2(\IF)$-isomorphism.
This induces a scalar extension that we also call $\varphi$.
Together, these give rise to an $\SL_\ell(\oIF)\times \SL_\ell(\oIF)\times \SL_2(\oIF)$-isomorphism
\[
\star\otimes\star\otimes\varphi : \oIF^{\ell\times\ell\times 2}
 \ \to \ 
\oIF^{\ell\circ\times\ell\circ\times 2\star}.
\]
This induces an $\SL_\ell(\oIF)\times \SL_\ell(\oIF)\times \SL_2(\oIF)$-isomorphism $(\star\otimes\star\otimes\varphi)^{\odot\ell m} : \Sym^{\ell m}\:\oIF^{\ell\times\ell\times2}
\to
\Sym^{\ell m}\:\oIF^{\ell\circ\times\ell\circ\times 2\star}$.
We take $\SL_\ell(\oIF)\times \SL_\ell(\oIF)$-invariants on both sides, and we use \eqref{eq:VVtilde} on the right-hand side to obtain the $\SL_2(\oIF)$-isomorphism
$(\Sym^{\ell m}\:\oIF^{\ell\times\ell\times2})^{\SL_\ell(\oIF)\times\SL_\ell(\oIF)}
\to
(\Sym^{\ell m}\:\oIF^{\ell\star\times\ell\star\times 2\star})^{\SL_\ell(\oIF)\times\SL_\ell(\oIF)}$
as the restriction of $(\star\otimes\star\otimes\varphi)^{\odot\ell m}$.
This finishes the proof, because $(\star\otimes\star\otimes\varphi)^{\odot\ell m}$
maps elements in
$\Sym^{\ell m}\:\IF^{\ell\times\ell\times2}$
to elements in 
$\Sym^{\ell m}\:\IF^{\ell\star\times\ell\star\times 2\star}$.
\end{proof}
\begin{corollary}
\label{cor:innerdual}
\[
\Big(\big(\Sym^{\ell m} \:\IF^{\ell \times\ell \times 2}\big)^\star_\IF\Big)_{\SL_\ell(\oIF)\times \SL_\ell(\oIF)} \ \cong  \ \Big(\Sym_{\ell m} \:\IF^{\ell \times\ell \times 2}\Big)_{\SL_\ell(\oIF)\times \SL_\ell(\oIF)}
\]
as $\SL_2(\IF)$-representations.
\end{corollary}
\begin{proof}
The dual of the left-hand side of Corollary~\ref{cor:innerdual} is
\[
\left(
\Big(\big(\Sym^{\ell m} \:\IF^{\ell \times\ell \times 2}\big)^\star_\IF\Big)_{\SL_\ell(\oIF)\times \SL_\ell(\oIF)}
\right)_\IF^\ast
\cong
\left(
\Big(\big(\Sym^{\ell m} \:\IF^{\ell \times\ell \times 2}\big)^\star_\IF\Big)_\IF^\ast
\right)^{\SL_\ell(\oIF)\times \SL_\ell(\oIF)}
\cong
\big(\Sym^{\ell m} \:\IF^{\ell \times\ell \times 2}\big)^{\SL_\ell(\oIF)\times \SL_\ell(\oIF)}.
\]
The dual of the right-hand side of Corollary~\ref{cor:innerdual} is
\[
\left(
\Big(\Sym_{\ell m} \:\IF^{\ell \times\ell \times 2}\Big)_{\SL_\ell(\oIF)\times \SL_\ell(\oIF)}
\right)^\star_\IF
\cong
\left(
\Big(\Sym_{\ell m} \:\IF^{\ell \times\ell \times 2}\Big)^\star_\IF\right)^{\SL_\ell(\oIF)\times \SL_\ell(\oIF)}
\cong
\Big(\Sym^{\ell m} \:\IF^{\ell\star \times\ell\star \times 2\star}\Big)^{\SL_\ell(\oIF)\times \SL_\ell(\oIF)}.
\]
Both are isomorphic $\SL_2(\IF)$-representations by Lemma~\ref{lem:dualisomlem}, hence their duals are also isomorphic $\SL_2(\IF)$-representations.
\end{proof}
We prove \eqref{eq:coinvariantsdual} by composing the $\SL_2(\IF)$-isomorphisms:
\begin{eqnarray}
\left(\big(\Sym^{\ell m}\:\IF^{\ell\times\ell\times 2}\big)^{\SL_\ell(\oIF)\times \SL_\ell(\oIF)}\right)^\star_\IF
&\stackrel{\eqref{eq:afterdualizing}}{\cong}&
\left(\big(\Sym^{\ell m}\:\IF^{\ell\times\ell\times 2}\big)^\star_\IF\right)_{\SL_\ell(\oIF)\times \SL_\ell(\oIF)}
\nonumber\\
&\stackrel{\textup{Cor.}~\ref{cor:innerdual}}{\cong}&
\left(\Sym_{\ell m}\:\IF^{\ell\times\ell\times 2}\right)_{\SL_\ell(\oIF)\times \SL_\ell(\oIF)}.
\label{eq:coinvariants}\end{eqnarray}

\section{Hermite Reciprocity, Wronskian, Hodge Isomorphism}
\label{sec:isos}
In this section we discuss the other isomorphisms in Figure~\ref{fig:square} over the standard basis.
We give another proof that the Hermite reciprocity map $\Hermite_{m,\ell} := \Wronski_{\ell,m}^\star\circ \Hodge_{m,\ell} \circ \Wronski_{m,\ell}$ is an isomorphism by combinatorially proving that its matrix is triangular over the standard basis.
Working over the standard basis enables us to consider the behavior of the ``leading coefficients''.
The core argument of our proof is a combinatorial statement about partitions,
see Proposition~\ref{pro:transposition}.
\cite[Exa.~5.1]{MW:22} for a finite example.

Recall that $\mathscr P_k(m,\ell) = \{\la \vdash k, \ \la\subseteq (\ell^m)\}$.
We define $\mathscr P'_k(m,\ell) = \{\la \vdash k, \ \la\subseteq (\ell^m), \ \textup{all $\la_i$ distinct for $i\in\{1,\ldots,m\}$}\}$. Partitions in $\mathscr P'_k(m,\ell)$ are called \emph{regular partitions}.
We make the following definitions for $\la\in\{0,\ldots,\ell\}^m$.
\begin{eqnarray*}
F_{\otimes,\s}(\la) &:=& x^{\la_1}y^{\ell-\la_1}\otimes \cdots \otimes x^{\la_m}y^{\ell-\la_m} \in \tensor^m \Sym^\ell \IF^2
\\
F_{\d,\s}(\lambda) &:=& \sum_{\sigma\in \aS_m/\stab(\la)} F_{\otimes,\s}(\sigma\cdot \la) \in \Sym_m \Sym^\ell \IF^2
\\
F_{\wedge,\s}(\lambda) &:=& x^{\la_1}y^{\ell-\la_1}\wedge \cdots \wedge x^{\la_m}y^{\ell-\la_m} \in \Wedge^m\Sym^\ell \IF^2
\\
F_{\wedge,\d}(\lambda) &:=&
F(\la_1) \wedge F(\la_2) \wedge \cdots \wedge F(\la_m)
\in \Wedge^m\Sym_\ell \IF^2
\end{eqnarray*}

These vectors to partitions $\la$ (regular partitions in the last two cases) form a basis of their respective vector spaces, which we call the standard bases.
The support $\supp(v)$ of a vector $v$ is the set of partitions for which $v$ has a nonzero coefficient.
For two partitions, $\la$ and $\mu$, we say that $\la$ \emph{dominates} $\mu$ if $\forall i:\sum_{j=1}^i\la_j\geq\sum_{j=1}^i\mu_j$.
Let $d_m=(m-1,m-2,\dots,0)$ be the staircase vector of length $m$.
The following linear maps are $\SL_2(\IF)$-equivariant, see for example \cite{MW:22}.

\paragraph{Wronskian:}The Wronskian map $\Wronski_{m,\ell} : \Sym_m\Sym^\ell \IF^2 \to\Wedge^m \Sym^{\ell+m-1} \IF^2$ is defined as
\[
F_{\d,\s}(\la) \mapsto \sum_{\sigma\in \aS_m/\stab (\la)} F_{\wedge,\s} (\sigma\la + d_m).
\]
It is easy to see that $\supp(\Wronski_{m,\ell}(F_{\d,\s}(\la)))$ is a poset with respect to the dominance order with maximum element $\la+d_m$.

\paragraph{Hodge:}
The Hodge map $\Hodge_{m,\ell} : \Wedge^m \Sym^{\ell+m-1} \IF^2 \to \Wedge^\ell \Sym_{\ell+m-1} \IF^2$ is defined as
\[
F_{\wedge,\s}(\la) \mapsto F_{\wedge,\d}(((\ell+m-1)^\ell)-\la^\c),
\]
where $\la^\c$ is the partition that has a row of length $i$ if and only if $\la$ does not have a row of length $i$.

\paragraph{Dual Wronskian:}
The dual Wronskian map
$\Wronski_{\ell,m}^\star: \Wedge^\ell \Sym_{\ell+m-1} \IF^2 \to \Sym^\ell\Sym_m \IF^2$
is defined as
\[
F_{\wedge,\d}(\la) \mapsto \sum_{\pi\in \aS_\ell} F_{\s,\d}(\pi\la - d_\ell)
\]
where we set $F_{\s,\d}(\nu)=0$ if for some $i$ we have $\nu_i<0$ or $\nu_i>m$.
It is easy to see that $\supp(\Wronski^\star_{\ell,m}(F_{\wedge,\d}(\la)))$ is a poset with respect to the dominance order with minimum element $\la-d_\ell$.

We now describe a combinatorial counterpart to the top square in the top diagram of Figure~\ref{fig:square},
which we will use in the proof of Theorem~\ref{thm:hermiterec} below.
We define the following partition transformations (see also Figure \ref{fig:partitions}:
\begin{figure}
    \centering
\begin{tikzpicture}[xscale=1.5,yscale=1]
\begin{scope}[yscale=2,xscale=1.5]
\node (ll) at (0,0) {$\ytableausetup{smalltableaux}
\begin{ytableau}
 *(gray) & *(gray) & *(gray) & *(gray) \\
 *(gray) & *(gray) &*(white) &*(white) \\
 *(gray) & *(gray) &*(white) &*(white) \\
 *(gray) &*(white) &*(white) &*(white) \\
 *(white)&*(white) &*(white) &*(white) \\
\end{ytableau}$};
\node (ul) at (0,2) {$\ytableausetup{smalltableaux}
\begin{ytableau}
 *(gray) & *(gray) & *(gray) & *(gray) & *(gray) & *(gray) & *(gray) & *(gray) \\
 *(gray) & *(gray) & *(gray) & *(gray) & *(gray) &*(white) &*(white) &*(white) \\
 *(gray) & *(gray) & *(gray) & *(gray) &*(white) &*(white) &*(white) &*(white) \\
 *(gray) & *(gray) &*(white) &*(white) &*(white) &*(white) &*(white) &*(white) \\
 *(white)&*(white) &*(white) &*(white) &*(white) &*(white) &*(white) &*(white) \\
\end{ytableau}$};
\node (lr) at (3,0) {$\begin{ytableau}
*(gray) & *(gray) & *(gray) & *(gray) &*(white) \\
*(gray) & *(gray) & *(gray) &*(white) &*(white)\\
*(gray) &*(white) &*(white) &*(white) &*(white)\\
 *(gray) &*(white) &*(white) &*(white) &*(white)\\
\end{ytableau}$ 
};
\node (ur) at (3,2) { 
$\ytableausetup{smalltableaux}
\begin{ytableau}
 *(gray) & *(gray) & *(gray) & *(gray) & *(gray) & *(gray) & *(gray) &*(white) \\
 *(gray) & *(gray) & *(gray) & *(gray) & *(gray) &*(white) &*(white) &*(white) \\
 *(gray) & *(gray) &*(white) &*(white) &*(white) &*(white) &*(white) &*(white) \\
 *(gray) &*(white) &*(white) &*(white) &*(white) &*(white) &*(white) &*(white) \\
\end{ytableau}$ 
};
\draw[-Triangle] (ll) -- (ul) node [midway, right, fill=white] { 
$\ytableausetup{smalltableaux}
\begin{ytableau}
 *(gray)\bullet & *(gray)\bullet & *(gray)\bullet & *(gray)\bullet & *(gray) & *(gray) & *(gray) & *(gray) \\
 *(gray)\bullet & *(gray)\bullet & *(gray)\bullet & *(gray) & *(gray) &*(white) &*(white) &*(white) \\
 *(gray)\bullet & *(gray)\bullet & *(gray) & *(gray) &*(white) &*(white) &*(white) &*(white) \\
 *(gray)\bullet & *(gray) &*(white) &*(white) &*(white) &*(white) &*(white) &*(white) \\
 *(white)&*(white) &*(white) &*(white) &*(white) &*(white) &*(white) &*(white) \\
\end{ytableau}$ 
};
\draw[-Triangle] (ul) -- (ur) node [midway, above, fill=white] { 
\begin{tikzpicture}[scale=0.3]\ytableausetup{smalltableaux}
\node at (0,0){\begin{ytableau}
 *(gray) & *(gray) & *(gray) & *(gray) & *(gray) & *(gray) & *(gray) & *(gray) \\
*(white) &*(white) &*(white) &*(white) &*(white) &*(white) &*(white) & *(gray) \\
*(white) &*(white) &*(white) &*(white) &*(white) &*(white) & *(gray) & *(gray) \\
 *(gray) & *(gray) & *(gray) & *(gray) & *(gray) &*(white) &*(white) &*(white) \\
 *(gray) & *(gray) & *(gray) & *(gray) &*(white) &*(white) &*(white) &*(white) \\
*(white) &*(white) &*(white) & *(gray) & *(gray) & *(gray) & *(gray) & *(gray) \\
 *(gray) & *(gray) &*(white) &*(white) &*(white) &*(white) &*(white) &*(white) \\
*(white) & *(gray) & *(gray) & *(gray) & *(gray) & *(gray) & *(gray) & *(gray) \\
 *(white)&*(white) &*(white) &*(white) &*(white) &*(white) &*(white) &*(white) \\
\end{ytableau}}; 
\draw[ultra thick, -] (-4,0.4) -- ++(0,1) -- ++(1,0) -- ++(0,1) -- ++(1,0) -- ++(0,1) -- ++(1,0) -- ++(0,1) -- ++(1,0) -- ++(0,1) -- ++(1,0) -- ++(0,1) -- ++(1,0) -- ++(0,1) -- ++(1,0) -- ++(0,1) -- ++(1,0) -- ++(0,1);
\end{tikzpicture} 
};
\draw[-Triangle] (ur) -- (lr) node [midway, right, fill=white] { 
$\ytableausetup{smalltableaux}
\begin{ytableau}
 *(gray)\bullet & *(gray)\bullet & *(gray)\bullet & *(gray) & *(gray) & *(gray) & *(gray) &*(white) \\
 *(gray)\bullet & *(gray)\bullet & *(gray) & *(gray) & *(gray) &*(white) &*(white) &*(white) \\
 *(gray)\bullet & *(gray) &*(white) &*(white) &*(white) &*(white) &*(white) &*(white) \\
 *(gray) &*(white) &*(white) &*(white) &*(white) &*(white) &*(white) &*(white) \\
\end{ytableau}$ 
};
\draw[-Triangle] (ll) -- (ul) node [midway, left, fill=white] {$\widetilde{\Wronski}_{m,\ell}$};
\draw[-Triangle] (ul) -- (ur) node [midway, below, fill=white] {$\widetilde{\Hodge}_{m,\ell}$};
\draw[-Triangle] (ur) -- (lr) node [midway, left, fill=white] {$\widetilde{\Wronski}^\star_{\ell,m}$};
\draw[-Triangle] (ll) -- (lr) node [midway, above, fill=white] {$\widetilde{\Hermite}_{m,\ell}$};
\end{scope}
\end{tikzpicture}
\caption{An illustration of Proposition~\ref{pro:transposition}. Here $m=5$, $\ell=4$, $\la=(4,2,2,1)$.}
\label{fig:partitions}
\end{figure}
\begin{itemize}
\item
$\widetilde{\Wronski}_{m,\ell}:\mathscr P(m,\ell)\to \mathscr P'(m,\ell + m - 1)$, \, 
$\widetilde{\Wronski}_{m,\ell}(\la)=\la+d_m$.
\item
$\widetilde{\Hodge}_{m,\ell}:\mathscr P'(m,\ell+m-1)\to\mathscr P'(\ell,\ell+m-1)$, \, 
$\widetilde{\Hodge}_{m,\ell}(\la) = ((\ell+m-1)^m)-\la^\c$.
\item $\widetilde\Wronski^\star_{\ell,m}:\mathscr P'(\ell,\ell+m-1) \to \mathscr P(\ell,m)$, \,
$\widetilde{\Wronski}^\star_{\ell,m}(\la) = \la-d_\ell$.
\item $\widetilde\Hermite_{m,\ell}:\mathscr P(m,\ell)\to\mathscr P(\ell,m)$, \,
$\widetilde\Hermite_{m,\ell}(\la)=\la^T$.
\end{itemize}

\begin{proposition}
\label{pro:transposition}
The following diagram commutes.
\begin{center}
{\centering
\begin{tikzpicture}
\node (ul) at (0,0) {$\mathscr P'_{k+\binom{m}{2}}(m,\ell+m-1)$};
\node (ur) at (6,0) {$\mathscr P'_{k+\binom{\ell}{2}}(\ell,\ell+m-1)$};
\node (ll) at (0,-2) {$\mathscr P_k(m,\ell)$};
\node (lr) at (6,-2) {$\mathscr P_k(\ell,m)$};
\draw[-Triangle] (ll) -- (ul) node [midway, left, fill=white] {$\widetilde{\Wronski}_{m,\ell}$};
\draw[-Triangle] (ul) -- (ur) node [midway, below, fill=white] {$\widetilde{\Hodge}_{m,\ell}$};
\draw[-Triangle] (ur) -- (lr) node [midway, right, fill=white] {$\widetilde{\Wronski}^\star_{\ell,m}$};
\draw[-Triangle] (ll) -- (lr) node [midway, above, fill=white] {$\widetilde{\Hermite}_{m,\ell}$};
\end{tikzpicture}
}
\end{center}
\end{proposition}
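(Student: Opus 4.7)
The plan is to verify the identity $\widetilde{\Wronski}^\star_{\ell,m}\circ\widetilde{\Hodge}_{m,\ell}\circ\widetilde{\Wronski}_{m,\ell}(\la)=\la^T$ for every $\la\in\mathscr P_k(m,\ell)$ by tracking each intermediate partition as an explicit subset of $\{0,1,\ldots,\ell+m-1\}$. First I would pad $\la$ with zeros to length $m$ and introduce the ``Maya set''
\[
S(\la):=\{\la_i+m-i \mid 1\leq i\leq m\},
\]
whose $m$ elements are distinct because $\la_i\geq\la_{i+1}$. By definition, $\widetilde{\Wronski}_{m,\ell}(\la)=\la+d_m$ is the decreasing rearrangement of $S(\la)$, and $T(\la):=\{0,1,\ldots,\ell+m-1\}\setminus S(\la)$ has $\ell$ elements.

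The combinatorial heart of the proof is the identity
\[
T(\la)=\{m-1+j-\la^T_j \mid 1\leq j\leq \ell\}.
\]
Denoting the right-hand side by $T'(\la)$, I would first check that both $S(\la)$ and $T'(\la)$ lie inside $\{0,\ldots,\ell+m-1\}$ (immediate from $0\leq\la_i\leq\ell$ and $0\leq\la^T_j\leq m$) and that $T'(\la)$ has $\ell$ distinct elements (since $j-\la^T_j$ is strictly increasing in $j$). Next I would show $S(\la)\cap T'(\la)=\emptyset$: if $\la_i+m-i=m-1+j-\la^T_j$, then $\la_i+\la^T_j=i+j-1$, but the defining relation $\la_i\geq j\Longleftrightarrow\la^T_j\geq i$ of the conjugate partition forces either $\la_i+\la^T_j\geq i+j$ (when $\la_i\geq j$) or $\la_i+\la^T_j\leq i+j-2$ (when $\la_i<j$), both contradictions. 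Since $S(\la)\sqcup T'(\la)$ is contained in the $(m+\ell)$-element set $\{0,\ldots,\ell+m-1\}$ and has cardinality $m+\ell$, it must equal the whole set, so $T'(\la)=T(\la)$.

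The rest is bookkeeping. Writing the elements of $T(\la)$ in increasing order as $u_j=m-1+j-\la^T_j$, the partition $(\la+d_m)^\c$ appearing in the definition of $\widetilde{\Hodge}$ lists these elements in decreasing order, so rectangular complementation inside the $\ell\times(\ell+m-1)$ box produces
\[
\widetilde{\Hodge}_{m,\ell}(\la+d_m)_j=(\ell+m-1)-u_j=(\ell-j)+\la^T_j.
\]
Finally, subtracting $d_\ell$ (whose $j$-th entry is $\ell-j$) yields $\la^T_j$, which equals $\widetilde{\Hermite}_{m,\ell}(\la)_j$ by definition. I expect the main obstacle to be notational rather than conceptual---namely, keeping straight the increasing versus decreasing orderings of $T(\la)$ and the conventions of rectangular complementation---while the one-line disjointness argument via $\la_i\geq j\Longleftrightarrow\la^T_j\geq i$ is the only non-trivial combinatorics.
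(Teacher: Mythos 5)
Your proof is correct and uses essentially the same approach as the paper: the paper also reduces the claim to showing that $\la+d_m$ and $((\ell+m-1)^\ell)-(\la^T+d_\ell)$ (which is your $T'(\la)$ written as a partition) have no row length in common, proves disjointness via the identical case analysis on whether the box $(i,j)$ lies in $\la$, and then concludes by counting that these $m+\ell$ distinct values must exhaust $\{0,\ldots,\ell+m-1\}$. Your ``Maya set'' framing just makes the bookkeeping in the final step slightly more explicit.
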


\begin{proof}
Let $\la\in\mathscr P_k(m,\ell)$.
We have $\widetilde{\Wronski}_{m,\ell}(\la) = \la+d_m$,
and $\big(\widetilde{\Wronski}^\star_{\ell,m}\big)^{-1}(\la^T) = \la^T+d_\ell$.
Since these two partitions together have $m+\ell$ many rows,
to finish the proof
it suffices to prove that
$\la+d_m$ and $((\ell+m-1)^\ell)-(\la^T+d_\ell)$ have no row length in common.
This is proved for example in \cite[eq.~(1.7)]{MacD98}.
For being self-contained, we provide a short proof here.
For the sake of contradiction, assume there is a common element.
That means there are indices $i\in \{1,\dots, m\}$ and $j\in \{1,\dots, \ell\}$, such that $\la_i + m-i = (\ell+m -1)- (\la^T_j + \ell - j)$, which is equivalent to $\la_i+\la^T_j=i+j-1$.
We make a case distinction and rule out both cases.
In the case that $\la_i\geq j$,
the Young diagram of $\la$ has a box in row $i$ and column $j$, hence $\la^T_j\geq i$, thus we get $\la_i+\la^T_j\geq i+j>i+j-1$.
Otherwise, in the case $\la_i \leq j-1$, the Young diagram of $\la$ has no box in row $i$ and column $j$,
and hence $\la^T_j\leq i-1$, so we get $\la_i+\la^T_j\leq i+j-2 < i+j-1$.
In both cases we reached the desired contradiction $\la_i+\la^T_j\neq i+j-1$.
\end{proof}

\begin{claim}\label{cla:Hodgereverse}
$\widetilde{\Hodge}_{m,\ell}$ reverses the dominance order, i.e., is antimonotone.
\end{claim}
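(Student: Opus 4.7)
The plan is to deduce the claim by inverting the commutative square of Proposition~\ref{pro:transposition}, obtaining
\[
\widetilde{\Hodge}_{m,\ell} \ = \ (\widetilde{\Wronski}^{\star}_{\ell,m})^{-1} \circ \widetilde{\Hermite}_{m,\ell} \circ (\widetilde{\Wronski}_{m,\ell})^{-1},
\]
and then tracking how each of the three factors interacts with dominance.

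The two outer factors are shifts: $(\widetilde{\Wronski}_{m,\ell})^{-1}(\nu)=\nu-d_m$ and $(\widetilde{\Wronski}^{\star}_{\ell,m})^{-1}(\rho)=\rho+d_\ell$. Adding a fixed vector to both sides of a dominance comparison leaves the defining partial-sum inequalities $\sum_{j\leq i}\la_j\geq\sum_{j\leq i}\mu_j$ intact, so both of these maps preserve the dominance order. The middle factor $\widetilde{\Hermite}_{m,\ell}(\la)=\la^T$ is partition transposition, which is the classical antitone involution on partitions of a fixed size (a short proof uses the identity $\sum_{k\leq i}\la^T_k=\sum_{j\geq 1}\min(\la_j,i)$, comparing it with the analogous identity for $\mu$ under $|\la|=|\mu|$).

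Composing (preserve $\circ$ reverse $\circ$ preserve) yields reverse, which is exactly the claim. The only bookkeeping point is that Proposition~\ref{pro:transposition} matches up graded pieces $\mathscr P_k$ and $\mathscr P'_{k+\binom{m}{2}}$, so the partitions being compared at each intermediate stage have equal size and dominance is meaningfully defined throughout. I do not anticipate any serious obstacle, since each of the three ingredient facts is either immediate from the definitions or entirely classical.
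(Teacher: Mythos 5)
Your argument is correct, and it takes a genuinely different route from the paper's. The paper proves Claim~\ref{cla:Hodgereverse} directly and locally: it invokes Brylawski's characterization of dominance via single downward box moves, checks (after a staircase normalization to stay among regular partitions) that it suffices to handle one box move, and then verifies by hand on the ``row-length indicator'' vectors that $\widetilde{\Hodge}_{m,\ell}$ turns a downward move in $\la$ into an upward move in $\widetilde{\Hodge}_{m,\ell}(\la)$. You instead invert the already-proved commutative square of Proposition~\ref{pro:transposition} to write $\widetilde{\Hodge}_{m,\ell}=(\widetilde{\Wronski}^{\star}_{\ell,m})^{-1}\circ\widetilde{\Hermite}_{m,\ell}\circ(\widetilde{\Wronski}_{m,\ell})^{-1}$, observe that staircase shifts are order-isomorphisms for dominance (the partial-sum inequalities are unchanged under adding a common vector), and import the classical fact that transposition of partitions of a fixed size is antitone. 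There is no circularity, since the paper's proof of Proposition~\ref{pro:transposition} is independent of Claim~\ref{cla:Hodgereverse}. Your approach is arguably more conceptual: it pinpoints the source of the reversal as the antitone property of $\la\mapsto\la^T$, at the cost of relying on Proposition~\ref{pro:transposition} and on an external classical lemma; the paper's argument is more self-contained but requires the Brylawski box-move lemma and a somewhat fiddly local verification. Both are valid; the bookkeeping you flag (that the diagram is taken degree by degree, so all partitions compared have equal size) is exactly the point to be careful about, and you handle it correctly.
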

\begin{proof}
The isomorphisms $\widetilde{\Wronski}_{m,\ell}$ and $\widetilde{\Wronski}^\star_{\ell,m}$ respect the dominance order, while transposition is known to be anti-isomorphism, so it reverses the order.
By the commutativity of the diagram from Proposition~\ref{pro:transposition}, we get that $\widetilde{\Hodge}_{m,\ell}$ reverses the dominance order, as well.
\end{proof}

\begin{theorem}[Hermite reciprocity]\label{thm:hermiterec}
The $\GL_2(\IF)$-equivariant map $\Hermite_{m,\ell} =  \Wronski_{\ell,m}^\star\circ \Hodge_{m,\ell} \circ \Wronski_{m,\ell}$ is an isomorphism.
\end{theorem}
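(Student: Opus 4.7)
The plan is to prove invertibility of $\Hermite_{m,\ell}$ ($\GL_2(\IF)$-equivariance is built into the definition) by showing that its matrix is triangular with $+1$ on the diagonal in a suitable pairing of standard bases. Fix a linear extension $\prec$ of the dominance order on $\mathscr P_k(m,\ell)$; order the domain basis $\{F_{\d,\s}(\la)\}$ by $\prec$, and order $\{F_{\s,\d}(\mu)\}$ by pulling $\prec$ back along the dominance-reversing involution $\mu\mapsto\mu^T$. The central claim is
\[
\Hermite_{m,\ell}(F_{\d,\s}(\la)) \;=\; F_{\s,\d}(\la^T) \;+\!\!\!\sum_{\mu\text{ strictly dominates }\la^T}\!\!\!c_{\la,\mu}\,F_{\s,\d}(\mu),
\]
which exhibits the matrix as triangular with unit diagonal, hence invertible over any $\IF$.

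To prove the claim I would track the dominance-extreme summand through the three factors of $\Hermite_{m,\ell} = \Wronski^\star_{\ell,m}\circ \Hodge_{m,\ell}\circ \Wronski_{m,\ell}$. First, the image $\Wronski_{m,\ell}(F_{\d,\s}(\la))$ has dominance-maximum $F_{\wedge,\s}(\la+d_m)$; a short rearrangement-inequality calculation on $\sum_i(m-i)\la_{\sigma^{-1}(i)}$ shows that no coset $\sigma\notin\stab(\la)$ can yield a sequence that sorts to $\la+d_m$, so the coefficient there is exactly $+1$. Next, $\Hodge_{m,\ell}$ is the basis bijection $F_{\wedge,\s}(\nu)\mapsto F_{\wedge,\d}(\widetilde\Hodge_{m,\ell}(\nu))$, which by Claim~\ref{cla:Hodgereverse} reverses dominance, so the coefficient $+1$ now sits at the dominance-\emph{minimum} $F_{\wedge,\d}(\widetilde\Hodge_{m,\ell}(\la+d_m))$ of the new support. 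Finally, $\Wronski^\star_{\ell,m}(F_{\wedge,\d}(\mu'))$ has dominance-minimum $F_{\s,\d}(\mu'-d_\ell)$, and the dual rearrangement argument on $\sum_i(\ell-i)\mu'_{\pi^{-1}(i)}$ (using that the regular partition $\mu'$ has distinct entries) shows only $\pi=\id$ realises it, again with coefficient $+1$. By Proposition~\ref{pro:transposition} this tracked contribution is exactly $F_{\s,\d}(\la^T)$.

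The decisive no-cancellation step is that no \emph{other} summand from the Hodge-stage expansion can also contribute to $F_{\s,\d}(\la^T)$: any $F_{\wedge,\d}(\mu')$ in that expansion with $\mu'\neq\widetilde\Hodge_{m,\ell}(\la+d_m)$ has $\mu'$ strictly dominating $\widetilde\Hodge_{m,\ell}(\la+d_m)$, so every $\rho$ in the support of $\Wronski^\star_{\ell,m}(F_{\wedge,\d}(\mu'))$ dominates $\mu'-d_\ell$, which strictly dominates $\la^T$; hence $\rho\neq\la^T$. This yields the central claim and completes the proof.

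I expect the main technical subtlety to be the two rearrangement arguments certifying that the leading coefficient is genuinely $+1$ and not accidentally zero in positive characteristic. The support descriptions attached to $\Wronski_{m,\ell}$ and $\Wronski^\star_{\ell,m}$ are combinatorially clear, and Claim~\ref{cla:Hodgereverse} together with Proposition~\ref{pro:transposition} already performs the heavy combinatorial bookkeeping. The nontrivial point is that those sums run over coset representatives (resp.\ permutations) of coordinate sequences rather than sorted partitions, so one must verify that only the identity coset (resp.\ permutation) produces the extreme sorted partition, ruling out additional contributions that could combine into a coefficient divisible by the characteristic.
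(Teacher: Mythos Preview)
Your proposal is correct and follows essentially the same route as the paper's proof: track the dominance-extreme term through $\Wronski_{m,\ell}$, $\Hodge_{m,\ell}$, and $\Wronski^\star_{\ell,m}$ using Claim~\ref{cla:Hodgereverse} to exhibit the matrix as triangular, then invoke Proposition~\ref{pro:transposition} to identify the diagonal entry at $\la^T$. Your explicit rearrangement arguments and no-cancellation step spell out what the paper absorbs into its ``easy to see'' remarks about the Wronskian and dual Wronskian supports together with the bare citation of Proposition~\ref{pro:transposition} for the diagonal value~$1$.
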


\begin{proof}
Let $\la\in\mathscr P(m,\ell)$.
Then $\supp(\Wronski_{m,\ell}(\la))$ is a poset with respect to the dominance order whose maximum element is $\widetilde{\Wronski}_{m,\ell}(\la)$.
Hence, $\supp(\Hodge_{m,\ell}(\Wronski_{m,\ell}(\la)))$ is a poset with respect to the dominance order whose minimum element is $\widetilde{\Hodge}_{m,\ell}(\widetilde{\Wronski}_{m,\ell}(\la))$, see Claim~\ref{cla:Hodgereverse}.
Therefore, $\supp(\Wronski^\star_{\ell,m}(\Hodge_{m,\ell}(\Wronski_{m,\ell}(\la))))$ is a poset with respect to the dominance order whose minimum element is $\widetilde{\Wronski}^\star_{\ell,m}(\widetilde{\Hodge}_{m,\ell}(\widetilde{\Wronski}_{m,\ell}(\la)))$.
We conclude that the matrix of $\Wronski^\star_{\ell,m}\circ\Hodge_{m,\ell}\circ\Wronski_{m,\ell}$
is triangular with respect to the standard basis.
Proposition~\ref{pro:transposition} implies that the diagonal elements are all 1, which finishes the proof.
\end{proof}

\section{Some computations over finite fields}
\label{sec:computations}
We determine via computer calculations that
$(\Sym^{\bullet}(\IF^{\ell\times\ell\times2}))^{\SL_\ell(\IF)\times \SL_\ell(\IF)}$
can be a strict superset of
$(\Sym^{\bullet}(\IF^{\ell\times\ell\times2}))^{\SL_\ell(\oIF)\times \SL_\ell(\oIF)}$.
We do this by finding invariant polynomials $p(A)\in (\Sym^d(\IF^{\ell\times\ell}))^{\SL_\ell(\IF)\times \SL_\ell(\IF)}$ over small finite fields $\IF$, which cannot be written as a power of the determinant. These can be transformed into invariant polynomials in $(\Sym^d(\IF^{\ell\times\ell\times2}))^{\SL_\ell(\IF)\times \SL_\ell(\IF)}$ as usual via $p(A)\boxtimes F(k)$ for any $0\leq k\leq \ell$, but these invariants cannot be generated by the polynomials $M_\ell(k)$.
We present here a selection of these invariants.

\begin{itemize}
    \item $(\Sym^3(\IF_2^{2\times2}))^{\SL_2(\IF_2)\times \SL_2(\IF_2)}:$\\
    $A_{2,2} A_{1,1}^2+A_{2,2}^2 A_{1,1}+A_{1,2} A_{2,1}^2+A_{1,2}^2 A_{2,1}$

	\item $(\Sym^4(\IF_2^{2\times2}))^{\SL_2(\IF_2)\times \SL_2(\IF_2)}:$\\
    $A_{1,1}^4+A_{2,2} A_{1,1}^3+A_{1,2}^2 A_{1,1}^2+A_{2,1}^2 A_{1,1}^2+A_{1,2} A_{2,1} A_{1,1}^2+A_{1,2} A_{2,2} A_{1,1}^2+A_{2,1} A_{2,2} A_{1,1}^2+A_{2,2}^3A_{1,1}+A_{1,2} A_{2,1}^2 A_{1,1}+A_{1,2} A_{2,2}^2 A_{1,1}+A_{2,1} A_{2,2}^2A_{1,1}+A_{1,2}^2 A_{2,1} A_{1,1}+A_{1,2}^2 A_{2,2} A_{1,1}+A_{2,1}^2 A_{2,2} A_{1,1}+A_{1,2} A_{2,1} A_{2,2} A_{1,1}+A_{1,2}^4+A_{2,1}^4+A_{2,2}^4+A_{1,2}A_{2,1}^3+A_{1,2}^2 A_{2,2}^2+A_{2,1}^2 A_{2,2}^2+A_{1,2} A_{2,1} A_{2,2}^2+A_{1,2}^3A_{2,1}+A_{1,2} A_{2,1}^2 A_{2,2}+A_{1,2}^2 A_{2,1} A_{2,2}$

	\item $(\Sym^4(\IF_3^{2\times2}))^{\SL_2(\IF_3)\times \SL_2(\IF_3)}:$\\$A_{1,2}^2 A_{2,1}^2+A_{1,1} A_{1,2} A_{2,2} A_{2,1}+A_{1,1}^2 A_{2,2}^2$

    \item $(\Sym^5(\IF_2^{2\times2}))^{\SL_2(\IF_2)\times \SL_2(\IF_2)}:$
    \begin{itemize}
	\item[$\bullet$] $A_{2,2}^2 A_{1,1}^3+A_{2,2}^3 A_{1,1}^2+A_{1,2} A_{2,1} A_{2,2} A_{1,1}^2+A_{1,2} A_{2,1} A_{2,2}^2 A_{1,1}+A_{1,2} A_{2,1}^2 A_{2,2} A_{1,1}+A_{1,2}^2 A_{2,1} A_{2,2} A_{1,1}+A_{1,2}^2A_{2,1}^3+A_{1,2}^3 A_{2,1}^2$
	\item[$\bullet$] $A_{2,2} A_{1,1}^4+A_{2,2}^4 A_{1,1}+A_{1,2} A_{2,1}^4+A_{1,2}^4 A_{2,1}$
   \end{itemize}
   
	\item $(\Sym^6(\IF_2^{2\times2}))^{\SL_2(\IF_2)\times \SL_2(\IF_2)}:$\begin{itemize}
	\item[$\bullet$] $A_{2,2} A_{1,1}^5+A_{1,2}^2 A_{1,1}^4+A_{1,2} A_{2,1} A_{1,1}^4+A_{1,2} A_{2,2} A_{1,1}^4+A_{2,1} A_{2,2}^2 A_{1,1}^3+A_{2,1}^2 A_{2,2} A_{1,1}^3+A_{1,2}^4 A_{1,1}^2+A_{1,2} A_{2,1}^3A_{1,1}^2+A_{1,2} A_{2,2}^3 A_{1,1}^2+A_{1,2} A_{2,1}^2 A_{2,2} A_{1,1}^2+A_{2,2}^5 A_{1,1}+A_{2,1} A_{2,2}^4 A_{1,1}+A_{1,2}^2 A_{2,1}^3 A_{1,1}+A_{1,2}^2 A_{2,2}^3 A_{1,1}+A_{1,2}^2 A_{2,1}A_{2,2}^2 A_{1,1}+A_{1,2}^4 A_{2,1} A_{1,1}+A_{1,2}^4 A_{2,2} A_{1,1}+A_{2,1}^4 A_{2,2} A_{1,1}+A_{1,2} A_{2,1}^5+A_{2,1}^2 A_{2,2}^4+A_{1,2} A_{2,1} A_{2,2}^4+A_{2,1}^4 A_{2,2}^2+A_{1,2}^3A_{2,1} A_{2,2}^2+A_{1,2}^5 A_{2,1}+A_{1,2} A_{2,1}^4 A_{2,2}+A_{1,2}^3 A_{2,1}^2 A_{2,2}$

	\item[$\bullet$] $A_{2,2} A_{1,1}^5+A_{2,1}^2 A_{1,1}^4+A_{1,2} A_{2,1} A_{1,1}^4+A_{2,1} A_{2,2} A_{1,1}^4+A_{1,2} A_{2,2}^2 A_{1,1}^3+A_{1,2}^2 A_{2,2} A_{1,1}^3+A_{2,1}^4 A_{1,1}^2+A_{2,1} A_{2,2}^3A_{1,1}^2+A_{1,2}^3 A_{2,1} A_{1,1}^2+A_{1,2}^2 A_{2,1} A_{2,2} A_{1,1}^2+A_{2,2}^5 A_{1,1}+A_{1,2} A_{2,1}^4 A_{1,1}+A_{1,2} A_{2,2}^4 A_{1,1}+A_{2,1}^2 A_{2,2}^3 A_{1,1}+A_{1,2}^3 A_{2,1}^2A_{1,1}+A_{1,2} A_{2,1}^2 A_{2,2}^2 A_{1,1}+A_{1,2}^4 A_{2,2} A_{1,1}+A_{2,1}^4 A_{2,2} A_{1,1}+A_{1,2} A_{2,1}^5+A_{1,2}^2 A_{2,2}^4+A_{1,2} A_{2,1} A_{2,2}^4+A_{1,2}^4 A_{2,2}^2+A_{1,2}A_{2,1}^3 A_{2,2}^2+A_{1,2}^5 A_{2,1}+A_{1,2}^2 A_{2,1}^3 A_{2,2}+A_{1,2}^4 A_{2,1} A_{2,2}$

	\item[$\bullet$] $A_{2,2}^2 A_{1,1}^4+A_{2,2}^4 A_{1,1}^2+A_{1,2}^2 A_{2,1}^4+A_{1,2}^4 A_{2,1}^2$

	\item[$\bullet$] $A_{2,1}^2 A_{1,1}^4+A_{2,1} A_{2,2} A_{1,1}^4+A_{2,2}^3 A_{1,1}^3+A_{2,1} A_{2,2}^2 A_{1,1}^3+A_{2,1}^2 A_{2,2} A_{1,1}^3+A_{2,1}^4 A_{1,1}^2+A_{1,2} A_{2,1}^3 A_{1,1}^2+A_{1,2} A_{2,2}^3A_{1,1}^2+A_{1,2}^2 A_{2,1}^2 A_{1,1}^2+A_{1,2}^2 A_{2,2}^2 A_{1,1}^2+A_{2,1}^2 A_{2,2}^2 A_{1,1}^2+A_{1,2}^2 A_{2,1} A_{2,2} A_{1,1}^2+A_{1,2} A_{2,1}^4 A_{1,1}+A_{1,2} A_{2,2}^4A_{1,1}+A_{1,2}^2 A_{2,1}^3 A_{1,1}+A_{1,2}^2 A_{2,2}^3 A_{1,1}+A_{1,2} A_{2,1}^2 A_{2,2}^2 A_{1,1}+A_{1,2}^2 A_{2,2}^4+A_{1,2}^3 A_{2,1}^3+A_{1,2}^4 A_{2,2}^2+A_{1,2}^2 A_{2,1}^2A_{2,2}^2+A_{1,2}^3 A_{2,1} A_{2,2}^2+A_{1,2}^3 A_{2,1}^2 A_{2,2}+A_{1,2}^4 A_{2,1} A_{2,2}$
    \end{itemize}

	\item $(\Sym^6(\IF_3^{2\times2}))^{\SL_2(\IF_3)\times \SL_2(\IF_3)}:$\\$A_{2,2}^2 A_{1,1}^4+2 A_{1,2} A_{2,1} A_{2,2} A_{1,1}^3+A_{2,2}^4 A_{1,1}^2+2 A_{1,2} A_{2,1} A_{2,2}^3 A_{1,1}+2 A_{1,2} A_{2,1}^3 A_{2,2} A_{1,1}+2 A_{1,2}^3 A_{2,1} A_{2,2} A_{1,1}+A_{1,2}^2A_{2,1}^4+A_{1,2}^4 A_{2,1}^2$

\end{itemize}

\section*{Acknowledgments}
The authors are grateful to Mark Wildon and Darij Grinberg for comments on an earlier version of this paper.
Grinberg provided a shorter proof of Claim~\ref{cla:Hodgereverse}, which is the one presented in this version, and pointed us to \cite[eq.~(1.7)]{MacD98} to shorten the proof of Proposition~\ref{pro:transposition}.
Moreover, he showed us a different definition of $\boxtimes$, which we use in this version, and his comments led us to develop the generalization of the coinvariant space that is introduced in \S\ref{sec:coinvariants}.

\bibliographystyle{alpha}
\bibliography{lit}

\newcommand{\etalchar}[1]{$^{#1}$}
\begin{thebibliography}{BLMW11}

\bibitem[AFP{\etalchar{+}}19]{AFPRW:19}
Marian Aprodu, Gavril Farkas, Stefan Papadima, Claudiu Raicu, and Jerzy Weyman.
\newblock Koszul modules and {G}reen's conjecture.
\newblock {\em Inventiones mathematicae}, 218:657--720, 2019.

\bibitem[BLMW11]{BLMW11}
Peter B{\"u}rgisser, Joseph~M Landsberg, Laurent Manivel, and Jerzy Weyman.
\newblock An overview of mathematical issues arising in the geometric complexity theory approach to {V}{P}$\ne${V}{N}{P}.
\newblock {\em SIAM Journal on Computing}, 40(4):1179--1209, 2011.

\bibitem[B{\"u}r16]{Bur16}
Peter B{\"u}rgisser.
\newblock Permanent versus determinant, obstructions, and {K}ronecker coefficients.
\newblock {\em S{\'e}minaire Lotharingien de Combinatoire}, 75:B75a, 2016.

\bibitem[B{\"u}r24]{Bur24}
Peter B{\"u}rgisser.
\newblock Completeness classes in algebraic complexity theory.
\newblock arXiv:2406.06217, 2024.

\bibitem[DW17]{derksen2017introduction}
Harm Derksen and Jerzy Weyman.
\newblock {\em An Introduction to Quiver Representations}.
\newblock Graduate Studies in Mathematics. American Mathematical Society, 2017.

\bibitem[FH13]{FH:13}
William Fulton and Joe Harris.
\newblock {\em Representation theory: a first course}, volume 129.
\newblock Springer Science \& Business Media, 2013.

\bibitem[FI20]{FI:20}
Nick Fischer and Christian Ikenmeyer.
\newblock The computational complexity of plethysm coefficients.
\newblock {\em computational complexity}, 29:1--43, 2020.

\bibitem[Ike12]{Ike:12}
Christian Ikenmeyer.
\newblock {\em Geometric Complexity Theory, Tensor Rank, and {L}ittlewood-{R}ichardson Coefficients}.
\newblock PhD thesis, Universit\"at Paderborn, 2012.

\bibitem[IMW17]{IMW:17}
Christian Ikenmeyer, Ketan Mulmuley, and Michael Walter.
\newblock On vanishing of {K}ronecker coefficients.
\newblock {\em computational complexity}, 26:949--992, 2017.

\bibitem[IP17]{IP:17}
Christian Ikenmeyer and Greta Panova.
\newblock Rectangular {K}ronecker coefficients and plethysms in geometric complexity theory.
\newblock {\em Advances in Mathematics}, 319:40--66, 2017.

\bibitem[Kou90]{Kou:90}
Frank Kouwenhoven.
\newblock The $\lambda$-structure of the {G}reen ring of {G}{L}(2,{F}${}_p$) in characteristic $p$, {I}{I}.
\newblock {\em Communications in Algebra}, 18(6):1673--1700, 1990.

\bibitem[Lan05]{Lan05}
Serge Lang.
\newblock {\em Undergraduate algebra}.
\newblock Springer, 3rd edition, 2005.

\bibitem[Lan12]{Lan12}
Serge Lang.
\newblock {\em Algebra}, volume 211.
\newblock Springer Science \& Business Media, revised 3rd edition, 2012.

\bibitem[Mac98]{MacD98}
Ian~Grant Macdonald.
\newblock {\em Symmetric functions and Hall polynomials}.
\newblock Oxford university press, 1998.

\bibitem[Man11]{Man:11}
Laurent Manivel.
\newblock On rectangular {K}ronecker coefficients.
\newblock {\em Journal of Algebraic Combinatorics}, 33(1):153--162, 2011.

\bibitem[McD21]{McD21}
Eoghan McDowell.
\newblock {\em Representations of the general linear group with multilinear constructions}.
\newblock PhD thesis, Royal {H}olloway, {U}niversity of {L}ondon, 2021.

\bibitem[MW22]{MW:22}
Eoghan McDowell and Mark Wildon.
\newblock Modular plethystic isomorphisms for two-dimensional linear groups.
\newblock {\em Journal of Algebra}, 602:441--483, 2022.

\bibitem[O'H90]{OHa:90}
Kathleen~M O'Hara.
\newblock Unimodality of {G}aussian coefficients: a constructive proof.
\newblock {\em Journal of Combinatorial Theory, Series A}, 53(1):29--52, 1990.

\bibitem[PP13]{PP:13}
Igor Pak and Greta Panova.
\newblock Strict unimodality of q-binomial coefficients.
\newblock {\em Comptes Rendus Math{\'e}matique}, 351(11-12):415--418, 2013.

\bibitem[PP14]{PP:14}
Igor Pak and Greta Panova.
\newblock Unimodality via {K}ronecker products.
\newblock {\em Journal of Algebraic Combinatorics}, 40:1103--1120, 2014.

\bibitem[RS21]{RS:21}
Claudiu Raicu and Steven~V Sam.
\newblock Hermite reciprocity and {S}chwarzenberger bundles.
\newblock In {\em Commutative Algebra: Expository Papers Dedicated to David Eisenbud on the Occasion of his 75th Birthday}, pages 689--721. Springer, 2021.

\bibitem[Sta00]{Sta:00}
Richard~P Stanley.
\newblock Positivity problems and conjectures in algebraic combinatorics.
\newblock {\em Mathematics: frontiers and perspectives}, 295:319, 2000.

\bibitem[Stu08]{Stu:08}
Bernd Sturmfels.
\newblock {\em Algorithms in invariant theory}.
\newblock Springer Science \& Business Media, 2008.

\bibitem[SW00]{SW2000}
A.~Skowroński and Jerzy Weyman.
\newblock The algebras of semi-invariants of quivers.
\newblock {\em Transformation Groups}, 5:361--402, 12 2000.

\end{thebibliography}

\end{document}